\newtheorem{thm}{Theorem}[section]
\newtheorem{prop}[thm]{Proposition}
\newtheorem{lemma}[thm]{Lemma}
\newtheorem{cor}[thm]{Corollary}
\newtheorem{que}[thm]{Question}
\newtheorem{remark}[thm]{Remark}
\newtheorem{obs}[thm]{Observation}
\newtheorem{exa}[thm]{Example}
\newcommand{\stb}{, \ldots ,}
\newcommand{\N}{{\mathbb{N}}}
\newcommand{\R}{{\mathbb{R}}}
\begin{document}

\title{A characterization of $n$-associative, monotone, idempotent functions on an interval that have neutral elements}

\author{Gergely Kiss\thanks{%Supported by the project R-STR-1041-00-Z of the University of Luxembourg.
The first author was supported by the internal research project
R-AGR-0500-MR03 of the University of Luxembourg and the Hungarian
Scientific Research Fund (OTKA) K104178.}\\
University of Luxembourg, Faculty of Science, Mathematical Research
Unit, rue Richard Coudenhove-Kalergi 6, L-1359 Luxembourg,
gergely.kiss@uni.lu\\

\and

G\'abor Somlai \thanks{The second author was supported by the
Hungarian Scientific Research Fund (OTKA)
K115799.}\\
E\"otv\"os Lor\'and University, Faculty of Science, Institute of
Mathematics, P\'azm\'any P\'eter s\'et\'any. 1/c, Budapest, H-1117
Hungary, gsomlai@cs.elte.hu}

%\date{Communicated by Mikhail Volkov}

    \maketitle

    \begin{abstract}
    We investigate monotone idempotent $n$-ary semigroups and provide a generalization of the Czogala--Drewniak Theorem, which describes the idempotent monotone associative functions having a neutral element. We also present a complete characterization of idempotent monotone $n$-as\-so\-ciative functions on an interval that have neutral elements.

    {\bf Keywords:} quasitrivial, ordered semigroups, $n$-associativity, idempotency, monotonicity, neutral element

    {\bf MSR 2010 classification:} 06F05, 20M99
    \end{abstract}

    \section{Introduction}
    A function $F\colon X^n \to X$ is called $n$-\emph{associative} if for every $x_1 \stb x_{2n-1}\in X$ and every $i=1\stb n-1$, we have
    \begin{equation}
    \begin{split}
    &F(F(x_1\stb x_n),x_{n+1} \stb x_{2n-1})= \\ & F(x_1 \stb x_i
    ,F(x_{i+1} \stb x_{i+n}), x_{i+n+1}  \stb x_{2n-1}).
    \end{split}
    \end{equation}

    Throughout this paper we assume that the underlying sets of the algebraic structures under consideration are partially ordered sets (poset). Some of our results only work for totally ordered sets. In our main results we investigate $n$-ary semigroups on an arbitrary nonempty subinterval of the real numbers.

    A set $X$ endowed with an $n$-associative function $F\colon X^n \to X$ is called an $n$-\emph{ary semigroup} and is denoted by $(X,F_n)$. Clearly, we obtain a generalization of associative functions, which are the $2$-associative functions using our terminology.

    The main purpose of this paper is to describe a class of $n$-ary semigroups. An $n$-ary semigroup is called \emph{idempotent}  if  $F(a \stb a)=a$  for all $a \in X$.  Another important property is the monotonicity. An $n$-associative function is called \emph{monotone in the $i$-th variable} if for all fixed $a_1\stb a_{i-1}, a_{i+1} \stb a_n\in X$, the $1$-variable functions $f_i(x):=F(a_1 \stb a_{i-1},x,a_{i+1} \stb a_n)$ is order-preserving or order-reversing. An $n$-associative function is called \emph{monotone}
    if it is monotone in each of its variables. Further, we say that $e\in X$ is a \emph{neutral element} for an $n$-associative function $F$ if for every $x \in X$ and every $i=1\stb n$, we have $F(e \stb e, x, e \stb e) =x$, where $x$ is substituted for the $i$-th coordinate.

    An important construction of $n$-ary semigroups is the following.  Let $(X, F_2)$ be a binary semigroup. Let $F_n:=\underbrace{F_2
    \circ F_2 \circ \ldots \circ F_2}_{n-1}$, where
\begin{equation*}
  \begin{split}
    F_n(x_1 \stb x_n)&=\underbrace{F_2
    \circ F_2 \circ \ldots \circ F_2}_{n-1}(x_1 \stb x_n)\\
    &=F_2(x_1,F_2(x_2,\ldots ,F_2(x_{n-1},x_n))).
\end{split}
\end{equation*}
    We get an $n$-associative function $F_n \colon X^n \to X$ and an $n$-ary semigroup $(X,F_n)$. In this case we say that $(X,F_n)$ \emph{is derived} from the binary semigroup $(X,F_2)$ or, simply, that $F_n$ is derived from $F_2$. We also say that $(X, F_n)$ is a \emph{totally} (\emph{partially}) \emph{ordered $n$-ary semigroup} for emphasizing that $X$ is totally (partially) ordered.

    It is easy to show (see Lemma \ref{lemoda} below) that if $F_n$ is derived from $F_2$ and $F_2$ is either monotone or idempotent or has a neutral element, then so is $F_n$.

     An $n$-ary semigroup $(X,F_n)$ is called an $n$-\emph{ary group} if for each $i \in \{1 \stb n \}$, every $n-1$ elements $x_1 \stb x_{i-1}, x_{i+1} \stb x_n$ in $X$ and every $a \in X$, there exists a unique $b \in X$ with $F_n(x_1 \stb x_{i-1}, b, x_{i+1} \stb x_n)=a$. It is easy to see from the definition that ordinary groups are exactly the $2$-ary groups.

    Clearly, a function $F_n$ derived from a semigroup $F_2$ is $n$-associative but not every $n$-ary semigroup can be obtained in
    this way. Dudek and Mukhin \cite{DM} (see also Proposition \ref{propfrombinary}) proved that an $n$-ary semigroup $(X,F_n)$ is derived from a binary one if and only if $(X,F_n)$ contains a neutral element or one can adjoin a neutral element to it. As a special case of
    this theorem they obtained that an $n$-ary group is derived from a group if and only if it contains a neutral element.

    This result allows one to construct $n$-ary groups that are not derived from binary groups if $n$ is odd. Indeed, let $(X,+)$ be a group and $n=2k-1$. Define $G_n(x_1 \stb x_n):= \sum_{i=1}^{n} (-1)^{i}x_i$. It is easy to verify that $G_n$ is $n$-associative and we obtain an $n$-ary group. Moreover $G_n$ is clearly monotone. It is also easy to check that there is no neutral element for $G_n$.

    Finally, we say that an $n$-ary semigroup $(X,F_n)$  is \emph{quasitrivial} (or it is said to be \emph{conservative}) if for every $x_1 \stb x_n \in X$, we have $F_n(x_1 \stb x_n) \in \left\{ x_1 \stb x_n \right\}$. Such an $n$-variable function $F_n$  is called a \emph{choice function}. One might also say that $F_n$ preserves all subsets of $X$. Ackerman (see \cite{A}) investigated quasitrivial semigroups and also gave a characterization of them.

    Our paper is organized as follows. In Section \ref{secresults} we collect the main results proved in the paper. In Section \ref{secbinary}   we establish connections between $n$-ary semigroups and binary semigroups and prove Theorem \ref{thmbijection}. Section \ref{s3} is devoted to the proof of Theorems \ref{thm2} and  \ref{corollarymain}. Section \ref{seccr} contains a few concluding remarks.

    \section{Main results}\label{secresults}
    Let $I \subset \R$ be a  not necessarily bounded, nonempty interval. We denote by $\bar{I}$ the compact linear closure of $I$.\footnote{If $I$ is bounded and we denote the end-points of $I$ by $m$ and $M$ ($m\ge M$), then $\bar{I}=[m,M]$. If $I$ is  not bounded from below (or above), then we let $m=-\infty$ ($M=+\infty$, respectively). For instance, $\bar{\mathbb{R}}=\mathbb{R}\cup \{\pm \infty\}$.} Let  $g\colon \bar{I}\to \bar{I}$ be a decreasing function. For every $x \in I$, let $g(x-0)$ and $g(x+0)$ denote the limit of $g$ at $x$ from the left and from the right, respectively.\footnote{Let $m$ and $M$ be the boundary points of $\bar{I}$. We use the convention that $g(m-0)=M$ and $g(M+0)=m$.} We denote by $\Gamma_g$ the \emph{completed graph} of $g$, which is a subset of $\bar{I}^2$ obtained by modifying the graph of the function $g$ in the following way.
    If $x$ is a  discontinuity point of $g$, then we add a vertical line segment between the points $(x, g(x-0))$ and $(x,g(x+0))$ to extend the graph of $g$. Formally,
    $$\Gamma_g=\{(x,y)\in \bar{I}^2: g(x+0)\le y\le g(x-0)\}.$$
    We call $\Gamma_g$ (\emph{id})\emph{-symmetric} if $\Gamma_g$ is symmetric to the line $x=y$.

    The following theorem gives a description of idempotent monotone (2-ary) semigroups with neutral elements. These semigroups were first investigated by Czogala and Drewniak \cite{CD}, where the authors only dealt with closed subintervals of $\mathbb{R}$ but the statement holds for any non-empty interval. On the other hand, instead of monotonicity it was assumed that the binary function is monotone increasing. However, Lemma \ref{lemma4} shows that monotonicity implies monotone increasingness in this case.

    \begin{thm}\label{thmCD}
    Let $I$ be an arbitrary nonempty real interval. If a function $F_2\colon I^2\to I$ is associative idempotent monotone and has a
    neutral element $e \in I$, then there exits a monotone decreasing function $g\colon \bar{I}\to \bar{I}$ with $g(e)=e$ such that
    \begin{equation*}\label{eq1} F_2(x,y)=\begin{cases}
      \min{(x, y)} & \text{ if } y< g(x), \\
      \max{(x,y)}  & \text{ if } y>g(x),  \\
      \min{(x, y)} \text{ or } \max{(x,y)} & \text{ if }  y=g(x).
    \end{cases}
    \end{equation*}
    \end{thm}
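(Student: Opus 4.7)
The plan is to first upgrade the hypotheses to a structural fact about $F_2$ (quasitriviality), and then read off the decreasing function $g$ as the boundary between the ``min'' region and the ``max'' region. By Lemma~\ref{lemma4}, monotonicity together with the existence of a neutral element forces $F_2$ to be non-decreasing in each variable. Combined with idempotence this yields $\min(x,y) \le F_2(x,y) \le \max(x,y)$, so the theorem reduces to showing quasitriviality $F_2(x,y) \in \{x,y\}$ and then identifying the switching locus.

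For quasitriviality, suppose $x < y$ and, for contradiction, that $z := F_2(x,y) \in (x,y)$. Associativity together with idempotence yields
$$F_2(x,z) = F_2(x,F_2(x,y)) = F_2(F_2(x,x),y) = F_2(x,y) = z,$$
and symmetrically $F_2(z,y) = z$. I then compare these with the neutral-element identities $F_2(e,y) = y$ and $F_2(x,e) = x$, splitting on the position of $e$ relative to $x,y,z$. If $e \le x$, monotonicity of $F_2(\cdot,y)$ in the first slot gives $z = F_2(x,y) \ge F_2(e,y) = y$, contradicting $z < y$; the case $e \ge y$ is dual. The case $x < e < y$ splits further into $e \le z$ and $e \ge z$, each of which contradicts either $F_2(x,z) = z$ or $F_2(z,y) = z$ via monotonicity combined with $F_2(e,\cdot) = \mathrm{id}$ (e.g.\ if $e \le z$ then $y = F_2(e,y) \le F_2(z,y) = z$).

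Once quasitriviality is known, for each $x \in \bar I$ define
$$g(x) := \sup\{y \in \bar I : F_2(x,y) = \min(x,y)\},$$
with the convention that $\sup \emptyset$ equals the left endpoint of $\bar I$. Monotonicity of $F_2(x,\cdot)$ shows that the set in the supremum is a down-set, so for $y < g(x)$ one has $F_2(x,y) = \min(x,y)$ and for $y > g(x)$ one has $F_2(x,y) = \max(x,y)$, while at $y = g(x)$ either value is possible; this is exactly the piecewise description in the theorem. The neutral-element identity instantly gives $g(e) = e$, and more precisely $g(x) \le e$ for $x > e$ and $g(x) \ge e$ for $x < e$.

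Finally I would prove that $g$ is decreasing by contradiction: assume $x_1 < x_2$ with $g(x_1) < g(x_2)$, pick $y$ strictly between these values, and expand
$$F_2(F_2(x_1,x_2),y) = F_2(x_1,F_2(x_2,y))$$
using the just-established min/max rule. The right-hand side evaluates cleanly via $y < g(x_2)$ and $y > g(x_1)$, while the left-hand side is controlled by quasitriviality applied to the pair $(x_1,x_2)$; a short case check (depending on whether $x_1, x_2$ lie below, at, or above $e$, which determines $F_2(x_1, x_2)$) produces a contradiction in each subcase. I expect the main obstacle to lie in the quasitriviality step, specifically the subcase $x < e < y$, and in the careful handling of boundary points of $\bar I$ and of the single ambiguous value $y = g(x)$, which is exactly what forces the ``$\min$ or $\max$'' clause in the statement. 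These are organisational rather than conceptual difficulties.
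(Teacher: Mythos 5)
Your first half is sound, and in fact it is the right opening: Lemma~\ref{lemma4} upgrades monotonicity to monotone increasingness, idempotency then gives $\min(x,y)\le F_2(x,y)\le\max(x,y)$, and your quasitriviality argument (deriving $F_2(x,z)=z$ and $F_2(z,y)=z$ for $z=F_2(x,y)$ and splitting on the position of $e$) is correct; you should just note that the case $x>y$ must be run separately since commutativity is not assumed (the same identities and case split work verbatim). For context, the paper itself does not prove Theorem~\ref{thmCD} at all --- it quotes it from \cite{CD} --- so your attempt can only be judged on its own merits, and the genuine gap lies in your construction of $g$. The set $S_x=\{y:F_2(x,y)=\min(x,y)\}$ always contains the diagonal point $y=x$ (idempotency), so your $g(x)=\sup S_x$ satisfies $g(x)\ge x$ for every $x$; this is incompatible with $g$ being decreasing with $g(e)=e$ (for $x>e$ one needs $g(x)\le e<x$), and indeed your later remark that ``$g(x)\le e$ for $x>e$'' contradicts your own definition. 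Concretely, take the standard idempotent uninorm on $[0,1]$ with $e=1/2$ given by $F_2(x,y)=\min(x,y)$ if $x+y\le 1$ and $F_2(x,y)=\max(x,y)$ if $x+y>1$: then $S_{0.8}=[0,0.2]\cup\{0.8\}$ is \emph{not} a down-set, your definition yields $g(0.8)=0.8$ instead of the required $0.2$, and for $y=0.5<g(0.8)$ your formula predicts $\min$ while $F_2(0.8,0.5)=0.8=\max$. So the claim ``monotonicity of $F_2(x,\cdot)$ shows that the set in the supremum is a down-set'' is false as stated.

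The missing ingredient is the quadrant structure around the neutral element, which is also what your monotonicity argument silently needs in the crossing case $y_1<x<y_2$ with $F_2(x,y_2)=x$ (there monotonicity only gives $F_2(x,y_1)\le x$, which does not exclude $F_2(x,y_1)=x=\max(x,y_1)$). From neutrality, increasingness and internality one gets $F_2(x,y)\ge F_2(e,y)=y$ and $F_2(x,y)\ge F_2(x,e)=x$ whenever $x,y\ge e$, hence $F_2=\max$ on $\{x,y\ge e\}$, and dually $F_2=\min$ on $\{x,y\le e\}$. With this in hand the construction can be repaired: for $x\le e$ your definition of $g(x)$ is fine (there $S_x$ really is a down-set containing $[m,e]$), while for $x>e$ one must exclude the diagonal, e.g.\ set $g(x)=\sup\{y\le e: F_2(x,y)=y\}$; then the min/max dichotomy off the graph of $g$, $g(e)=e$, and your sketched proof of decreasingness can be carried through. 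Two smaller points: $F_2$ is only defined on $I^2$, so the suprema should range over $y\in I$ (taken in $\bar I$) and $g$ must be extended to endpoints of $\bar I\setminus I$ by hand; and the decreasingness step is only sketched, so the case analysis there still needs to be written out. As submitted, however, the passage from quasitriviality to a decreasing $g$ satisfying the displayed formula does not go through.
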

    Now we present a complete characterization of idempotent monotone increasing (2-ary) semigroups with neutral elements. First
    this was proved by Martin, Mayor, and Torrens \cite{MMT} for $I=[0,1]$. Their theorem contained a small error in the description, but essentially it was correct. In the original paper \cite{MMT} the following condition for $g$ was given instead of the symmetry of $\Gamma_g$.
    The function $g\colon[0,1]\to [0,1]$ satisfies
    \begin{equation}\label{eq2}
    \inf\{y: g(y)=g(x)\}\le g^2(x)\le \sup\{y:g(y)=g(x)\} \textrm{ for
    all } x\in [0,1].
    \end{equation}
    Here (and below) $g^2(x)$ stands for $(g \circ g)(x)$.

    The authors of \cite{RTBF} proved that Theorem \ref{thm1} holds if $F_2$ is commutative also and shown that condition \eqref{eq2} is not equivalent to the symmetry of $\Gamma_g$.  Recently, Theorem \ref{thm1} was reproved in an alternative way in \cite{KLMT} for any subinterval of $\R$.

     \begin{thm}\label{thm1} Let $I\subseteq\R$ be an arbitrary nonempty interval. A function $F_2\colon I^2\to I$ is associative idempotent monotone and has a neutral element $e\in I$  if and only if there exists a decreasing function $g\colon \bar{I}\to \bar{I}$ with $g(e)=e\in I$  such that the  completed graph $\Gamma_g$ is symmetric and
    \begin{equation}\label{eq3}
    F_2(x,y)=\begin{cases}
      \min{(x, y)} & \text{ if } y< g(x) \text{ or } y=g(x) \text{ and } x<g^2(x), \\
      \max{(x,y)}  & \text{ if } y>g(x) \text{ or } y=g(x) \text{ and } x>g^2(x), \\
      \min{(x, y)}  \text{ or } \max{(x,y)} & \text{ if }  y=g(x) \text{ and } x=g^2(x).
      \end{cases}
    \end{equation}
    Moreover, $F_2(x,y)=F_2(y,x)$ except perhaps the  set of points $(x,y)\in I^2$ satisfying  $y=g(x)$ and $x=g^2(x)=g(y)$.
    \end{thm}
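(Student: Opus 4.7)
My plan is to prove the two implications separately, leveraging Theorem \ref{thmCD} for the forward direction and a direct case analysis for the converse. The forward implication starts from the Czogala--Drewniak form: Theorem \ref{thmCD} supplies a decreasing $g\colon \bar I\to\bar I$ with $g(e)=e$ such that $F_2(x,y)$ equals $\min(x,y)$ when $y<g(x)$, $\max(x,y)$ when $y>g(x)$, and is ambiguous on the diagonal $y=g(x)$. What remains is to prove two things: that $\Gamma_g$ is symmetric, and that the value on the diagonal obeys the trichotomy in \eqref{eq3}. For the symmetry I would argue by contradiction: if $(a,b)\in\Gamma_g$ but $(b,a)\notin\Gamma_g$, then the decreasingness of $g$ forces a strict inequality such as $a>g(b-0)$ or $a<g(b+0)$, pinning $F_2(b,a)$ to $\max(a,b)$ or $\min(a,b)$ respectively while $F_2(a,b)$ need not agree. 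I would then apply associativity $F_2(F_2(a,b),c)=F_2(a,F_2(b,c))$ to a third point $c$ chosen close to the relevant boundary and reduce both sides by the CD formula to explicit expressions; the asymmetry produces a contradiction.

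For the trichotomy on $y=g(x)$, I would use associativity applied to a triple involving $x$, $g(x)$, and $g^2(x)$, together with idempotence, to express $F_2(x,g(x))$ in terms of the position of $x$ relative to $g^2(x)$. For the converse direction, define $F_2$ by \eqref{eq3} from a decreasing $g$ with $g(e)=e$ and symmetric $\Gamma_g$. The neutral element property is immediate from $g(e)=e$; idempotence follows because $g$ decreasing with $g(e)=e$ forces $y=x$ to fall in the first branch for $x<e$, the second branch for $x>e$, and the third at $e$ where both options yield $x$; monotonicity reduces to a case check on the three branches. The substantial task is verifying associativity, which I would handle by partitioning $(x,y,z)\in I^3$ according to the order of $x,y,z$ and according to whether each pair sits on, above, or below $\Gamma_g$. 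In the generic cases both sides reduce to a $\min$ or $\max$ over a subset of $\{x,y,z\}$ and match trivially; the borderline cases are precisely where the symmetry of $\Gamma_g$ and the $g^2$-condition are invoked to force the min/max choices on the two sides to agree.

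Finally, the commutativity statement is read off from the formula: outside the set $\{(x,y):y=g(x)\}$ the right-hand side of \eqref{eq3} is manifestly symmetric in $x$ and $y$; on the diagonal the only room for asymmetry is in the third branch, where an independent choice of $\min$ or $\max$ is permitted for $F_2(x,y)$ and $F_2(y,x)$, and the symmetry of $\Gamma_g$ identifies this locus as $\{y=g(x),\ x=g^2(x)=g(y)\}$. The main obstacle I expect is the associativity verification in the converse direction: the subcases proliferate, and the symmetry of $\Gamma_g$ has to be unpacked delicately to reconcile the choices of $\min$ versus $\max$ made on each side of the associativity law whenever the triple $(x,y,z)$ straddles the graph.
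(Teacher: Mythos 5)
The first thing to note is that the paper contains no proof of Theorem \ref{thm1}: it is imported from the literature (proved in \cite{MMT} for $[0,1]$ with a flawed side condition, corrected in \cite{RTBF} under commutativity, and reproved for arbitrary intervals in \cite{KLMT}), so there is no internal argument to compare yours with, and your proposal has to stand on its own. Its architecture --- forward direction from Theorem \ref{thmCD} plus an associativity argument forcing the symmetry of $\Gamma_g$ and the tie-break on $y=g(x)$, converse direction by direct verification --- is indeed the natural route and is not a wrong approach.

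The gap is that every step carrying the actual content of the theorem is left as a declaration of intent. For the symmetry of $\Gamma_g$, ``apply associativity to a third point $c$ chosen close to the relevant boundary, the asymmetry produces a contradiction'' is precisely the nontrivial part: the delicate cases are the jump discontinuities and flat pieces of $g$, where the vertical segments of the completed graph matter, and you give no explicit choice of $c$ and no computation. For the tie-break, note that the Czogala--Drewniak form gives cheaply that $y=g(x)$ and $x<g^2(x)=g(y)$ force $F_2(y,x)=\min(x,y)$; what the statement requires is the value of $F_2(x,y)$, i.e.\ exactly the commutativity of $F_2$ off the extra-critical pairs --- the ``moreover'' clause. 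You treat that clause as something to be read off from the formula, which is legitimate only in the converse direction; in the forward direction it is a substantive claim needing its own associativity/monotonicity argument, and it is exactly where the original condition \eqref{eq2} of \cite{MMT} went wrong, as the paper itself points out --- so this is the one place a referee cannot accept on faith. Finally, in the converse direction you correctly identify the associativity of the operation defined by \eqref{eq3} as the substantial task, but only announce a partition into cases; without the borderline cases (both pairs on $\Gamma_g$, critical versus non-critical pairs, points inside a jump of $g$) actually treated, this is a plan rather than a proof. In short: right strategy, but the three pivotal verifications --- symmetry of $\Gamma_g$, the $g^2$ tie-break (equivalently, commutativity off extra-critical pairs), and associativity of the constructed operation --- are missing, and they are the whole difficulty of the theorem.
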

    If $(X,F_n)$ is an $n$-semigroup having a neutral element $e$,  then one can assign a semigroup $(X,F_2)$ to it by letting $F_2(a,b):=F_n(a,e \stb e, b)$ for every $a,b\in X$. This map $F_n\mapsto F_2$ will be denoted by $\mathcal{F}$. Our main result in Section \ref{secbinary} is the following:
    \begin{thm}\label{thmbijection}
    For any totally ordered set $X$, the map $\mathcal{F}$ is a bijection between the set of associative idempotent monotone functions on $X$ having neutral elements and the set of $n$-associative idempotent monotone functions on $X$ having neutral elements.
    \end{thm}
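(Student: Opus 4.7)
The plan is to verify that $\mathcal{F}$ is well-defined on the source class, to identify the derivation procedure of the introduction as its inverse, and to use Proposition~\ref{propfrombinary} to close the loop.

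First, I would verify that $F_2(a,b) := F_n(a,e\stb e,b)$ is associative, idempotent, monotone and has $e$ as neutral element whenever $F_n$ lies in the source class. Associativity follows from $n$-associativity: applied to the $2n-1$ flat arguments $a,e\stb e,b,e\stb e,c$ (with $n-2$ copies of $e$ in each gap), it yields $F_n(F_n(a,e\stb e,b),e\stb e,c) = F_2(F_2(a,b),c)$ under one bracketing and $F_n(a,e\stb e,F_n(b,e\stb e,c)) = F_2(a,F_2(b,c))$ under another. Neutrality of $e$ is immediate from neutrality of $F_n$, and monotonicity is inherited from monotonicity of $F_n$ in its first and last coordinates. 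For idempotence I would first strengthen monotonicity to monotone increasingness in each coordinate (an $n$-ary analogue of Lemma~\ref{lemma4}: a coordinate on which $F_n$ is decreasing would contradict $F_n(a,e\stb e)=a$ and $F_n(e\stb e)=e$ for any $a>e$). The sandwich
\[
a = F_n(e\stb e,a) \le F_n(a,e\stb e,a) \le F_n(a\stb a) = a,
\]
valid for $a\ge e$ with reversed inequalities for $a\le e$, then gives $F_2(a,a)=a$.

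For the candidate inverse, given $F_2$ in the target class let $G_n$ be derived from $F_2$ as in the introduction. By Lemma~\ref{lemoda}, $G_n$ lies in the source class, and unwinding the derivation while using $F_2(e,x)=x$,
\[
\mathcal{F}(G_n)(a,b) = G_n(a,e\stb e,b) = F_2(a,F_2(e,\ldots,F_2(e,b)\ldots)) = F_2(a,b),
\]
so $\mathcal{F}$ is surjective and the derivation is a right inverse.

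For injectivity I would invoke Proposition~\ref{propfrombinary} of Dudek and Mukhin: every $n$-ary semigroup with a neutral element is derived from some binary semigroup $H_2$, which (by a monotonicity argument parallel to the one above, ruling out a nontrivial iterate of $H_2(e,\cdot)$ equal to the identity) must have $e$ as a neutral element. The same unwinding identifies $H_2$ with $\mathcal{F}(F_n)$, so $F_n$ is uniquely reconstructed from $\mathcal{F}(F_n)$ by derivation, and any two source-class elements with the same image must coincide. The main obstacle I anticipate is the idempotence step in the first paragraph, the only place where monotonicity plays an essential role, which depends on the $n$-ary analogue of Lemma~\ref{lemma4}.
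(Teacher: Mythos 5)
Your proposal is correct and its skeleton (show $\mathcal{F}$ lands in the binary class, exhibit the derived operation as its inverse, use Proposition~\ref{propfrombinary} to identify $F_n$ with the operation derived from its own section) is the same as the paper's, which proves the theorem via Proposition~\ref{prop} and \eqref{eqn2}; your surjectivity and injectivity arguments match the paper's almost verbatim (the parenthetical about iterates of $H_2(e,\cdot)$ is superfluous, since $e$ is automatically neutral for the section, cf.\ Observation~\ref{obsneutral}). Where you genuinely diverge is the crux, idempotence of $F_2$. The paper obtains it from Lemma~\ref{folemma}, the backward induction with the table of values $F_k(a\stb a,b\stb b)$ --- an argument that does not use the neutral element and works for any $F_n$ merely derived from an associative $F_2$ --- and only afterwards deduces that $F_n$ is increasing (Corollary~\ref{cor1}) via Lemma~\ref{lemma4}. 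You reverse the order: first pin the direction of monotonicity using that every section through $(e\stb e)$ is the identity, then squeeze $F_2(a,a)=F_n(a,e\stb e,a)$ between $F_n(e\stb e,a)=a$ and $F_n(a\stb a)=a$. This is considerably shorter and fully sufficient for Theorem~\ref{thmbijection}, where a neutral element is present; what it does not recover is Lemma~\ref{folemma} in its neutral-element-free generality. One point you should make explicit: your one-line direction argument only inspects the sections whose other coordinates all equal $e$, so it yields increasingness of \emph{all} sections only under the convention --- which the paper itself uses, e.g.\ in the proof of Lemma~\ref{folemma} --- that monotonicity in the $i$-th variable fixes a single direction independent of the fixed coordinates. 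If the direction were allowed to vary with the fixed coordinates, both your sandwich and the theorem itself would fail: adjoining a neutral element $e$ to the two-element group $\{b,a\}$ on the chain $e<b<a$ gives a derived ternary operation that is $3$-associative, idempotent, has neutral element $e$, and has every section order-preserving or order-reversing, while $F_2(a,a)=b\neq a$. With that reading stated (or with increasingness established as in the paper), your argument is complete and is a genuinely simpler route to the theorem.
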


    We will get the following result as an easy consequence of our investigation.
    \begin{thm}\label{thm2}
    Let $I\subset\R$ be a nonempty interval, $n\ge 2$, and $F_n\colon I^n\to I$ an $n$-associative monotone increasing idempotent function with a neutral element. Then $F_n$ is quasitrivial.
    \end{thm}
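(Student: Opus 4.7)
My plan is to reduce to the binary case via the bijection $\mathcal{F}$ of Theorem~\ref{thmbijection}. Since $F_n$ is $n$-associative, idempotent, monotone increasing, and has a neutral element $e\in I$, Theorem~\ref{thmbijection} provides an associative idempotent monotone binary function $F_2\colon I^2\to I$, with the same neutral element $e$, from which $F_n$ is derived. Because $F_2(a,b)=F_n(a,e\stb e,b)$, each one-variable section of $F_2$ is a one-variable section of $F_n$, so $F_2$ is monotone increasing as well.

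Next, I would apply Theorem~\ref{thm1} to $F_2$. The explicit formula \eqref{eq3} shows that $F_2(x,y)$ always equals either $\min(x,y)$ or $\max(x,y)$, so in particular $F_2(x,y)\in\{x,y\}$ for every $x,y\in I$. Hence $F_2$ is quasitrivial in the binary sense.

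Finally, since $F_n$ is derived from $F_2$, we have
\[
F_n(x_1\stb x_n)=F_2(x_1,F_2(x_2,\ldots,F_2(x_{n-1},x_n)\ldots)).
\]
A short induction, starting from $F_2(x_{n-1},x_n)\in\{x_{n-1},x_n\}$ and using at each step that $F_2$ returns one of its two arguments, gives $F_n(x_1\stb x_n)\in\{x_1\stb x_n\}$, i.e. $F_n$ is quasitrivial.

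There is no serious obstacle: the genuine work is already packaged into Theorem~\ref{thmbijection} (recovering $F_2$ as $\mathcal{F}(F_n)$ and recognizing $F_n$ as its iterate) and into Theorem~\ref{thm1} (the Martin--Mayor--Torrens classification). Modulo those two results, Theorem~\ref{thm2} is a corollary; the only points one must check carefully are that $F_2$ really inherits monotone increasingness and that quasitriviality propagates through the nested composition, both of which are routine.
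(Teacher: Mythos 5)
Your proposal is correct and takes essentially the same route as the paper: the paper reduces to the binary case via Proposition \ref{prop} (the content underlying Theorem \ref{thmbijection}), applies Theorem \ref{thm1} to conclude that $F_2$ is a choice function, and then notes that $F_n$, being a composition of $n-1$ copies of $F_2$, is also a choice function. Your explicit induction through the nested composition and your remark on how $F_2$ inherits monotone increasingness just spell out steps the paper leaves implicit.
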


    Applying Theorems \ref{thmbijection} and \ref{thm1}, we can obtain a practical method to calculate the value of $F_n(a_1 \stb a_n)$ for any $a_1\stb a_n\in I$, where $I \subset \R$ is an interval.

    For every decreasing function $g\colon \bar{I} \to \bar{I}$ a pair $(a,b)\in I^2$ is called \emph{critical} if $g(a)=b$ and $g(b)=a$. By Theorem \ref{thm1} and Lemma \ref{lemma4}, for every idempotent monotone semigroup $(X,F_2)$ with a neutral element, there exists a
    unique decreasing function $g$ satisfying \eqref{eq3}. Theorem \ref{thm1} shows also that $F_2$ commutes on every non-critical pair $(x,y)\in I^2$ (i.e., $F_2(x,y)=F_2(y,x)$). Since for a critical pair $(a,b)$ the value of $F_2(a,b)$ and $F_2(b,a)$
    can be independently chosen from $g$, we have two cases. We might have that $F_2$ commutes on $a,b$  or not. A pair $(a,b)$ is called \emph{extra-critical} if $F_2(a,b) \ne F_2(b,a)$. We note that being critical or extra-critical are both symmetric relations.

    Finally, in order to simplify notation and give a compact way to express the value of $F_n$ at some $n$-tuple $(a_1 \stb a_n)$ of the elements from a totally ordered set, we introduce the following.  The smallest and the largest elements of the set $\{a_1 \stb a_n \}$ are denoted by $c$ and $d$, respectively. There exist $i,j$ with $1 \le i \le j \le n$ such that $a_i=c \mbox{ or } d$, $a_j=c \mbox{ or } d$ and $a_k \ne c \mbox{ and } d$ for every $k <i$ and $k>j$. We write $e_1:=a_i$ and $e_2:=a_j$.

    The following statement was proved in \cite{DM}:
    \begin{prop}[Dudek, Mukhin]\label{propfrombinary}
    If $(X,F_n)$ is an $n$-ary semigroup with a  neutral element $e$, then $F_n$ is derived from a binary function $F_2$, where \begin{equation}\label{eq2n}
        F_2(a,b):=F_n(a,e\stb e,b).
    \end{equation}
\end{prop}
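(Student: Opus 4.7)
The plan is to verify two claims about the binary function $F_2(a,b) := F_n(a, e, \ldots, e, b)$ (with $n-2$ copies of $e$): first, that $F_2$ is a semigroup with $e$ as neutral element, and second, that $F_n(x_1, \ldots, x_n)$ coincides with the right-iterated expression $F_2(x_1, F_2(x_2, \ldots, F_2(x_{n-1}, x_n) \ldots))$. That $e$ is neutral for $F_2$ is immediate from the $n$-ary neutral element property: $F_2(e, a) = F_n(e, \ldots, e, a) = a$ and similarly $F_2(a, e) = a$.

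For associativity of $F_2$, both $F_2(F_2(a,b), c)$ and $F_2(a, F_2(b,c))$ unfold via the definition into two-level $F_n$-expressions on the same flat sequence of $2n-1$ arguments $a, e, \ldots, e, b, e, \ldots, e, c$ (with $n-2$ copies of $e$ in each gap); these two expressions correspond to bracketing the first $n$ and the last $n$ positions respectively, so equality follows directly from $n$-associativity.

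The main step is to show $F_n$ is derived from $F_2$. I would proceed by induction on $k \in \{1, \ldots, n\}$ to establish the identity
\begin{equation*}
F_n(x_1, \ldots, x_n) = F_2\bigl(x_1, F_2\bigl(x_2, \ldots, F_2(x_{k-1}, P_k) \ldots\bigr)\bigr),
\end{equation*}
where $P_k := F_n(x_k, e, \ldots, e, x_{k+1}, \ldots, x_n)$ has $k-1$ copies of $e$ inserted between $x_k$ and $x_{k+1}$. The case $k=1$ is tautological; at $k = n$ the innermost term simplifies by the neutral element to $P_n = F_n(x_n, e, \ldots, e) = x_n$, yielding the desired derivation. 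The inductive step from $k$ to $k+1$ reduces to the identity $P_k = F_2(x_k, P_{k+1})$. Unfolding the right-hand side, one obtains a nested $F_n$-expression whose flat argument list is $x_k$, followed by $n-2$ copies of $e$, then $x_{k+1}$, then $k$ copies of $e$, then $x_{k+2}, \ldots, x_n$, for a total of $2n-1$ entries. Applying $n$-associativity, I would rebracket the $n$-window consisting of the $n-k-1$ copies of $e$ immediately preceding $x_{k+1}$, the entry $x_{k+1}$ itself, and the $k$ copies of $e$ immediately following it; this window collapses to $x_{k+1}$ by the neutral element property, and the outer $F_n$-call then evaluates to precisely $P_k$.

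The principal obstacle is the positional bookkeeping for the rebracketing in the inductive step: one must verify that the chosen $n$-window contains exactly one non-$e$ entry and that the remaining $n-1$ positions, together with the window's output, assemble into the correct $n$-tuple for $P_k$. Once the window is identified, this is a routine counting check and there is no deeper obstruction.
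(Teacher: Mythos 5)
The paper does not prove this proposition at all: it is quoted as a known result of Dudek and Mukhin with a citation to their paper, so there is no internal argument to compare yours against. Your proof is correct and self-contained. The neutrality of $e$ for $F_2$ and the associativity of $F_2$ (both bracketings of the flat $(2n-1)$-sequence $a,e,\ldots,e,b,e,\ldots,e,c$ being instances of the stated $n$-associativity identity, with the bracket on the first $n$ and on the last $n$ positions respectively) are verified exactly as they should be. The telescoping induction also checks out: $P_k$ is a genuine $n$-tuple ($1+(k-1)+(n-k)=n$ entries), $P_1=F_n(x_1,\ldots,x_n)$, $P_n=x_n$ by neutrality, and in the step the flat list underlying $F_2(x_k,P_{k+1})$ has $2n-1$ entries with the inner bracket at offset $i=n-1$; moving it to offset $i=k$ is legitimate since $1\le k\le n-1$, the resulting window $e,\ldots,e,x_{k+1},e,\ldots,e$ (with $n-k-1$ and $k$ copies of $e$) collapses to $x_{k+1}$ by neutrality, and the outer call is precisely $P_k$. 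This is essentially the standard argument for the Dudek--Mukhin reduction, and the bookkeeping you flag as the only obstacle does assemble correctly, including the boundary cases $k=1$ and $k=n-1$.
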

    \begin{thm}\label{corollarymain}
    Let $F_n\colon I^n\to I$ be an $n$-associative idempotent function with a neutral element that is monotone in its first and last coordinates.     If $(c,d)$ is a not an extra-critical pair, then $F_n(a_1 \stb a_n)=F_2(c,d)$.  If $(c,d)$ is an extra-critical pair, then $F_n(a_1 \stb a_n)=F_2(e_1,e_2)$.
    \end{thm}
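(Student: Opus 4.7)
My plan is to reduce the problem to an iterated evaluation of the binary operation $F_2:=\mathcal{F}(F_n)$, and then to perform a case analysis on the restriction of $F_2$ to the two-element set $\{c,d\}$. By Proposition~\ref{propfrombinary}, $F_n$ is derived from $F_2$, so
\[
F_n(a_1,\ldots,a_n)=F_2(a_1,F_2(a_2,\ldots,F_2(a_{n-1},a_n))).
\]
Setting $t_n:=a_n$ and $t_k:=F_2(a_k,t_{k+1})$ for $k<n$, the question reduces to identifying $t_1$. Since monotonicity of $F_n$ in its first and last coordinates transfers to monotonicity of $F_2$ in each variable, and $F_2$ inherits idempotency and the neutral element from $F_n$, Theorem~\ref{thm1} applies to $F_2$: it is quasitrivial, is described by a decreasing function $g$, and commutes off the critical pairs of $g$. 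In particular $t_k\in[c,d]$ for every $k$.

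Idempotency and quasitriviality leave only four possibilities for $F_2$ restricted to $\{c,d\}$: (A) the constant $\min$, (B) the constant $\max$, (C) the first projection (so $F_2(c,d)=c$, $F_2(d,c)=d$), and (D) the second projection. Cases (A)--(B) are precisely the non-extra-critical subcases and (C)--(D) the extra-critical ones. The crux of the proof is an absorption lemma: in case (A), monotonicity of $F_2$ together with $F_2(c,c)=F_2(c,d)=F_2(d,c)=c$ forces $F_2(c,x)=F_2(x,c)=c$ for every $x\in[c,d]$, and case (B) is symmetric; in the extra-critical cases (C) and (D), the pair $(c,d)$ must itself be critical for $g$ since $F_2$ commutes off critical pairs, hence $g(c)=d$ and $g(d)=c$, and a short inspection of the formula of Theorem~\ref{thm1} (using that $g([c,d])\subseteq[c,d]$) yields $F_2(c,x)=F_2(x,c)=c$ and $F_2(d,x)=F_2(x,d)=d$ for every $x\in(c,d)$.

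With the absorption lemma in hand, a right-to-left induction on the tails $t_k$ completes the proof. In cases (A) and (B), once $k$ reaches the rightmost occurrence of $c$ (resp.\ $d$) the tail stabilises at $c$ (resp.\ $d$) and stays there, so $t_1=F_2(c,d)$. In the extra-critical cases, every intermediate $a_j\in(c,d)$ is absorbed as soon as the tail first becomes $c$ or $d$, hence $t_1$ equals the nested $F_2$-product of the subsequence $v_1,\ldots,v_m$ of those $a_i$ already lying in $\{c,d\}$; since $F_2|_{\{c,d\}}$ is a projection, this nested product equals $v_1=e_1$ in case (C) and $v_m=e_2$ in case (D), both of which coincide with $F_2(e_1,e_2)$. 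The main technical hurdle will be the extra-critical absorption step: monotonicity alone is insufficient, and one must genuinely track the position of $g(x)$ relative to $c$ and $d$ for every intermediate $x$, using the detailed description of $F_2$ in Theorem~\ref{thm1}.
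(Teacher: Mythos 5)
Your proposal is correct, and it reaches the conclusion by a route whose mechanics differ from the paper's. The paper splits into cases according to whether $c$ and $d$ commute with every element of $A=\{a_1,\ldots,a_n\}$: when they do, it rearranges the arguments using commutation and idempotency and invokes Lemma~\ref{lemma8.1} to obtain $F_2(c,d)$; when one extreme fails to commute with some element, it shows the other extreme absorbs everything; and in the extra-critical case it eliminates the elements strictly between $c$ and $d$, reduces the remaining word over $\{c,d\}$ to $F_3(c,d,c)$ or $F_3(d,c,d)$, and evaluates these. You instead fix the right-nested evaluation supplied by Proposition~\ref{propfrombinary}, classify $F_2|_{\{c,d\}}$ into the four quasitrivial possibilities (min, max, the two projections), which indeed correspond exactly to the non-extra-critical and extra-critical alternatives, prove an absorption lemma (by a monotonicity sandwich between $F_2(c,c)$, $F_2(c,d)$, $F_2(d,c)$, $F_2(d,d)$ in the commuting cases, and by inspecting the formula of Theorem~\ref{thm1} with $g(c)=d$, $g(d)=c$ in the extra-critical cases), and run a right-to-left induction on the tails $t_k$, using quasitriviality to keep every $t_k$ in $[c,d]$. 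The ingredients are the same (derivation from $F_2$, Theorem~\ref{thm1}, quasitriviality), but your decomposition by the behaviour of $F_2$ on $\{c,d\}$ avoids the commutation-based rearrangement and the appeal to Lemma~\ref{lemma8.1}, treats the two non-extra-critical subcases uniformly, and makes the elimination of intermediate elements fully explicit, whereas the paper's argument is shorter once Lemma~\ref{lemma8.1} is available and ends with the clean normal forms $F_3(c,d,c)$ and $F_3(d,c,d)$. Your final bookkeeping is also right: in the projection cases the nested product over the subsequence in $\{c,d\}$ is $e_1$ resp.\ $e_2$, and both coincide with $F_2(e_1,e_2)$; and your identification of the extra-critical absorption as the one step that genuinely needs the explicit description of $F_2$ rather than monotonicity alone matches where the real content lies.
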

    Now we point out three important consequences of Theorem \ref{corollarymain}. First we generalize Czogala--Drewniak's theorem (Theorem \ref{thmCD}) as follows.
    \begin{thm}\label{thmgen1}
    Let $I$ be an arbitrary nonempty real interval. If a function $F_n\colon I^n\to I$ is $n$-associative idempotent monotone and has a
    neutral element $e \in I$, then there exits a monotone decreasing function $g\colon \bar{I}\to \bar{I}$ with $g(e)=e$ such that $\Gamma_g$ is symmetric and
    \begin{equation*}\label{eqalt} F_n(a_1\stb a_n)=\begin{cases}
      c & \text{ if } c < g(d), \\
      d  & \text{ if } c > g(d),  \\
      c \text{ or } d & \text{ if }  c=g(d),
    \end{cases}
      \end{equation*}
     where $c$ and $d$ denote the minimum and the maximum of the set $\{a_1\stb a_n\}$, respectively.
    \end{thm}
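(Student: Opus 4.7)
The plan is to derive Theorem~\ref{thmgen1} as a short consequence of Theorems~\ref{corollarymain} and~\ref{thm1}. First I would invoke Proposition~\ref{propfrombinary} to see that $F_n$ is derived from the binary operation $F_2(a,b):=F_n(a,e\stb e,b)$, and then use Theorem~\ref{thmbijection} to conclude that this $F_2$ is itself associative, idempotent, monotone, and admits $e$ as a neutral element. Applying Theorem~\ref{thm1} to $F_2$ then produces a decreasing function $g\colon\bar{I}\to\bar{I}$ with $g(e)=e$ and $\Gamma_g$ symmetric such that $F_2$ satisfies~\eqref{eq3}; this is the $g$ that will serve in the conclusion of Theorem~\ref{thmgen1}.

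Next, by Theorem~\ref{corollarymain}, the value $F_n(a_1\stb a_n)$ equals either $F_2(c,d)$ (when $(c,d)$ is not extra-critical) or $F_2(e_1,e_2)$ (when it is). So the task reduces to reading $F_2(c,d)$ off from~\eqref{eq3} and rewriting its three cases (using $c\le d$, so that $\min=c$ and $\max=d$) in terms of $c$ versus $g(d)$ rather than $d$ versus $g(c)$. The key equivalence to prove is
\begin{equation*}
d<g(c)\ \text{or}\ \bigl(d=g(c)\ \text{and}\ c<g^2(c)\bigr)\ \Longleftrightarrow\ c<g(d),
\end{equation*}
and the symmetric statement with $>$ in place of $<$.

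I would establish this by exploiting the symmetry of $\Gamma_g$: the set of points lying strictly below the completed graph is invariant under reflection across the diagonal, so a strict inequality $d<g(c)$ translates at once to $c<g(d)$. The boundary case $d=g(c)$ is handled via $g^2(c)=g(g(c))=g(d)$, after which the auxiliary clause $c<g^2(c)$ is exactly $c<g(d)$. I expect this boundary analysis, which must carefully navigate possible discontinuities and flat parts of $g$ using only the symmetry of $\Gamma_g$, to be the only nontrivial step of the argument; the remainder is bookkeeping between the formulas.

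Finally, the extra-critical case is automatically subsumed into the ambiguous third case of the desired formula: by Theorem~\ref{thm1}, $F_2(c,d)\ne F_2(d,c)$ forces $(c,d)$ to be critical, i.e.\ $d=g(c)$ and $c=g(d)$, placing us in the case $c=g(d)$ where $F_n$ is permitted to take either of the values $c,d$. Since $e_1,e_2\in\{c,d\}$, the value $F_2(e_1,e_2)$ lies in $\{c,d\}$, so the stated formula holds in this case as well.
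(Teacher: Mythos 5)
Your overall route is the one the paper intends: Theorem \ref{thmgen1} is presented there as a direct consequence of Theorem \ref{corollarymain} together with Proposition \ref{prop} and Theorem \ref{thm1}, with no further details, and your reduction to reading $F_2(c,d)$ off from \eqref{eq3}, plus the observation that an extra-critical pair must be critical (so $c=g(d)$, $d=g(c)$) and is therefore absorbed by the permissive third line of the formula, is exactly right. However, the ``key equivalence'' you isolate is false as stated, and the justification you sketch conflates ``strictly below the completed graph'' with ``strictly below the value $g(c)$''. If $g$ jumps at $c$, then $d<g(c)$ does \emph{not} place $(c,d)$ strictly below $\Gamma_g$: the point $(c,d)$ may lie on the vertical segment of $\Gamma_g$ at $x=c$. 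Concretely, on $I=[0,1]$ take $g(x)=1-x$ on $[0,1/4)\cup(3/4,1]$, $g(x)=3/4-x$ on $(1/4,1/2]$, $g\equiv 1/4$ on $(1/2,3/4]$, and $g(1/4)=0.7$; this $g$ is decreasing, has symmetric completed graph and satisfies $g(3/8)=3/8$. With $c=1/4$, $d=0.6$ one has $d<g(c)$ (so the left-hand side of your equivalence holds) while $g(d)=c$, so $c<g(d)$ fails; choosing instead $g(1/4)=0.55$ kills the $>$ version the same way. So the biconditional you propose to prove cannot be proved.

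The proof is nevertheless repairable, because only the implications starting from the cases of the target formula are needed, and those do follow from symmetry. If $c<g(d)$, then $(d,g(d))\in\Gamma_g$ gives $(g(d),d)\in\Gamma_g$, i.e. $d\le g(g(d)-0)$, and monotonicity of $g$ with $c<g(d)$ gives $g(c)\ge g(g(d)-0)\ge d$; if $g(c)>d$ we are in the first line of \eqref{eq3}, and if $g(c)=d$ then $g^2(c)=g(d)>c$, so in either case $F_2(c,d)=c$. Symmetrically, $c>g(d)$ forces $g(c)\le d$ and then $F_2(c,d)=d$. Moreover $c<g(d)$ or $c>g(d)$ rules out criticality, so Theorem \ref{corollarymain} applies in its first branch and $F_n(a_1\stb a_n)=F_2(c,d)$ there. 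In the remaining case $c=g(d)$ nothing more is needed: whether Theorem \ref{corollarymain} returns $F_2(c,d)$ or $F_2(e_1,e_2)$, the value lies in $\{c,d\}$, which is all the third line of the formula asserts (and, as my example shows, the value may actually be forced rather than free, which is consistent with the ``$c$ or $d$'' wording). Replace the false equivalence by these one-sided implications and your argument goes through; as written, the central step would not.
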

    We note that a generalization of Theorem \ref{thm1} is essentially stated in Theorem \ref{corollarymain}. In \cite{RTBF} the authors investigated idempotent uninorms, which are associative, commutative, monotone functions with a  neutral element and idempotent also. We introduce \emph{$n$-uninorms}, which are $n$-associative, commutative, monotone functions with neutral element.
    Here we show a generalization of \cite[Theorem 3]{RTBF} for $n$-ary operations.
    \begin{thm}
    An $n$-ary operator $U_n$ is an idempotent $n$-uninorm on $[0, 1]$ with a neutral element $e \in [0,1]$ if and only if there exists a decreasing function $g \colon [0, 1] \to [0, 1]$ with $g(e)=e$ and with symmetric graph $\Gamma_g$ such that
    \begin{equation}\label{eq123} U_n(a_1\stb a_n)=
    \begin{cases}
    c&\textrm{ if } c < g(d)) \textrm{ or } d < g(c),\\
    d&\textrm{ if } c > g(d)\textrm{  or } d > g(c), \\
    c\textrm{ or } d  &\textrm{ if } c=g(d) \textrm{ and } d = g(c),
    \end{cases}
    \end{equation}
    where $c$ and $d$ are as in Theorem \ref{thmgen1}. Moreover, if $(c,d)$ is a critical pair $(c=g(d), d = g(c))$, then the value of $U_n(a_1\stb a_n)$ can be chosen to be $c$ or $d$ arbitrarily and independently from other critical pairs.
    \end{thm}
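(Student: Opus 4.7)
The plan is to reduce the statement to the binary case of Theorem \ref{thm1} combined with Theorem \ref{corollarymain}, which expresses every value of an $n$-ary semigroup with neutral element in terms of its binary trace. Assume first that $U_n$ is an idempotent $n$-uninorm with neutral element $e \in [0,1]$. Set $U_2(a,b) := U_n(a,e\stb e,b)$ as in \eqref{eq2n}; by Proposition \ref{propfrombinary}, $U_2$ is associative and $U_n$ is derived from it. Monotonicity, idempotency and the neutral element $e$ are inherited, and commutativity of $U_n$ forces $U_2(a,b) = U_n(a,e\stb e,b) = U_n(b,e\stb e,a) = U_2(b,a)$. Thus $U_2$ is a commutative idempotent monotone associative binary function with neutral element $e$. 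Theorem \ref{thm1} then supplies a decreasing $g\colon[0,1]\to[0,1]$ with $g(e)=e$ and symmetric $\Gamma_g$ satisfying \eqref{eq3}. Since commutativity of $U_2$ is exactly the absence of extra-critical pairs, Theorem \ref{corollarymain} yields $U_n(a_1\stb a_n) = U_2(c,d)$ for every input.

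The central computation is the rewriting of \eqref{eq3} into the symmetric form \eqref{eq123}. Applying \eqref{eq3} once with $(x,y)=(c,d)$ and once with $(x,y)=(d,c)$ and using $U_2(c,d)=U_2(d,c)$, one sees that $U_2(c,d)=c$ iff at least one of the clauses "$d<g(c)$ or ($d=g(c)$ and $c<g^2(c)$)" and "$c<g(d)$ or ($c=g(d)$ and $d<g^2(d)$)" holds. The auxiliary clause in each collapses: if $d=g(c)$ then $g^2(c)=g(d)$, so "$d=g(c)$ and $c<g^2(c)$" implies $c<g(d)$; symmetrically for the other. The disjunction therefore simplifies to "$c<g(d)$ or $d<g(c)$", and a parallel argument gives "$c>g(d)$ or $d>g(c)$" for $U_2(c,d)=d$. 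The only remaining case is $c=g(d)$ and $d=g(c)$, i.e., a critical pair, in which \eqref{eq3} leaves the choice free.

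For the converse, given such a $g$, I would define $U_2$ by \eqref{eq3} on non-critical pairs and choose $U_2(c,d)=U_2(d,c)$ arbitrarily on each critical pair, which produces a commutative idempotent monotone associative $U_2$ by Theorem \ref{thm1}. Deriving $U_n$ from $U_2$ via iterated composition and invoking the standard fact that these features transfer from $F_2$ to $F_n$ (Lemma \ref{lemoda}) yields an $n$-associative idempotent monotone function with neutral element $e$. Theorem \ref{corollarymain} again forces $U_n(a_1\stb a_n)=U_2(c,d)$, which depends only on the multiset $\{a_1\stb a_n\}$, so $U_n$ is symmetric and therefore commutative. The formula \eqref{eq123} then follows by reading the three cases from \eqref{eq3} exactly as above, and the "moreover" statement is immediate: the freedom in choosing $U_2$ on each critical pair propagates directly and independently to $U_n$ on every input whose min/max pair is that critical pair.

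The main obstacle I anticipate is the bookkeeping in the collapse of the disjunctive conditions from \eqref{eq3} to \eqref{eq123}, which hinges on the identity $g^2(c)=g(d)$ when $d=g(c)$ together with the symmetry of $\Gamma_g$. Once these two tools are in place, the consistency of the case split (in particular, that the $c$-case and $d$-case do not overlap outside critical pairs) follows automatically from the single-valuedness of $U_2$ on non-critical pairs rather than from a separate verification.
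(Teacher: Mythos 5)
Your proposal is correct and follows exactly the route the paper intends: the paper states this theorem as one of the consequences of Theorem \ref{corollarymain} (together with Theorems \ref{thm1} and \ref{thmbijection}/Proposition \ref{prop}) without writing out the details, and your reduction to the commutative binary trace $U_2$, the use of commutativity to exclude extra-critical pairs, and the rewriting of \eqref{eq3} into \eqref{eq123} via $g^2(c)=g(d)$ when $d=g(c)$ is precisely that argument, with the omitted bookkeeping filled in. No gaps worth flagging.
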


    We may generalize our concept in the following way. Let $X^*=\bigcup_{n\in \N} X^n$ be the set of finite length words over the alphabet $X$. A multivariate function $F\colon X^* \to X$ is \emph{associative} if it satisfies $$F({\bf x}, {\bf x'})=  F(F({\bf x}), F({\bf x'}))$$ for all ${\bf x}, {\bf x'}\in X^*$. It is easy to check that $F|_{X^n}$ is $n$-associative for every $n\in \mathbb{N}$. We say that $F$ is idempotent or monotone or that it has a neutral element if so are the functions $F|_{X^n}$ for every $n\in\N$.

    \begin{thm}
    Let $I$ be a nonempty real interval. Then $F\colon I^*\to I$ is associative idempotent monotone and has a neutral element if and only if there is a decreasing function $g\colon \bar{I}\to \bar{I}$ with symmetric completed graph $\Gamma_g$ such that $F|_{X^2}$ satisfies \eqref{eq3}. Furthermore $F$ must be monotone increasing in each variable.
    \end{thm}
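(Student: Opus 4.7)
My plan is to leverage the previously stated characterization theorems (Theorem \ref{thm1}, Theorem \ref{thmbijection}, Proposition \ref{propfrombinary}) to reduce the word-wise associative case to the binary case, and then to check that the extra content of word-wise associativity beyond $n$-associativity for each fixed $n$ is automatic once the family $\{F|_{I^n}\}_{n \ge 2}$ comes from a common binary operation by derivation.

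For the forward direction, I would note that by definition $F|_{I^n}$ is $n$-associative, idempotent, monotone and has neutral element $e$ for every $n$. Taking $n=2$ and applying Theorem \ref{thm1} produces the required decreasing $g\colon \bar I\to\bar I$ with $g(e)=e$, symmetric completed graph $\Gamma_g$, and satisfying \eqref{eq3}. The assertion that $F$ is in fact monotone increasing in each coordinate will follow from Lemma \ref{lemma4} applied to $F|_{I^2}$, and then propagated to $F|_{I^n}$ via the derivation identity $F|_{I^n}(x_1\stb x_n)=F_2(x_1,F_2(x_2,\ldots,F_2(x_{n-1},x_n)))$, which holds by Proposition \ref{propfrombinary} (since $F|_{I^n}$ is an $n$-ary semigroup with neutral element).

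For the backward direction, given $g$ satisfying the stated hypotheses I would first invoke Theorem \ref{thm1} to get an associative, idempotent, monotone binary function $F_2$ with neutral element $e$ on $I$. For each $n\ge 2$ I would then define $F|_{I^n}$ to be the $n$-ary function derived from $F_2$; by Lemma \ref{lemoda} each such $F|_{I^n}$ is $n$-associative, idempotent, monotone, and has neutral element $e$. The only genuinely new thing to verify is the word-wise associativity
\[
F(\mathbf{x},\mathbf{x}')=F(F(\mathbf{x}),F(\mathbf{x}')) \qquad(\mathbf{x}\in I^k,\ \mathbf{x}'\in I^l).
\]
Writing $a:=F(\mathbf{x})$ and $b:=F(\mathbf{x}')$ and unfolding both sides into nested applications of $F_2$, the right-hand side equals $F_2(a,b)$, while the left-hand side equals the fully right-nested expression $F_2(x_1,F_2(x_2,\ldots,F_2(x_k,F_2(x'_1,\ldots,F_2(x'_{l-1},x'_l))\ldots)))$. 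These two expressions coincide by the binary associativity of $F_2$ (repeated reassociation). Uniqueness of $g$, when required, follows again from Theorem \ref{thm1} applied to the restriction $F|_{I^2}$.

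The main obstacle is not conceptual but consists in making the reduction bookkeeping precise: one must check that the family $\{F|_{I^n}\}$ arising from the hypothesis of word-wise associativity is really the family derived from $F|_{I^2}$, and conversely that deriving from a fixed binary $F_2$ yields a family compatible with word-wise concatenation. The first uses Proposition \ref{propfrombinary} together with the fact that the neutral element of $F$ is a neutral element of every $F|_{I^n}$; the second is exactly the associativity-unfolding argument above. Once these two directions are in place, the theorem follows immediately from Theorem \ref{thm1} applied to $F|_{I^2}$.
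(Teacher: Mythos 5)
Your proposal is correct and follows essentially the route the paper intends: the paper states this theorem without a dedicated proof, presenting it as a direct consequence of Theorem \ref{thm1} together with the Section \ref{secbinary} machinery (Proposition \ref{propfrombinary}, Lemma \ref{lemoda}, Theorem \ref{thmbijection}, Corollary \ref{cor1}), which is exactly the reduction you carry out. The only point worth making explicit is the treatment of length-one words: idempotency (equivalently, the neutral-element condition in arity one) forces $F|_{I^1}=\mathrm{id}$, which is what lets you identify the Dudek--Mukhin binary operation $F|_{I^n}(a,e\stb e,b)$ with $F|_{I^2}(a,b)$ in the forward direction and makes the derived family the unique word-associative extension of $F_2$ in the converse direction.
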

    Concerning to associativity of multivariate functions the interested reader is referred to \cite{GMMP,EMT}.

   \section{From $n$-ary to binary semigroups}\label{secbinary}
    In this section we prove Theorem \ref{thmbijection}. Therefore the main purpose of this section is to transfer properties from an $n$-ary semigroup to the corresponding binary semigroup. We start with the converse. We have already mentioned that, given a semigroup $(X,F_2)$, one can easily construct the $n$-ary semigroup $(X,F_n)$, where $F_n=\underbrace{F_2 \circ \ldots \circ F_2}_{n-1}$.
    The following lemma is an easy consequence of the definitions.
    \begin{lemma}\label{lemoda}
    Let $(X,F_2)$ be a partially ordered semigroup. If $F_2$ has any of the following properties
    \begin{enumerate}[label=(\roman*)]
        \item monotone
        \item\label{lemodab} idempotent
        \item has a neutral element
    \end{enumerate}
    then so does the function $F_n$.
    \end{lemma}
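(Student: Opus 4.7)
The plan is to prove all three parts by induction on $n \ge 2$, exploiting the recursion
\[
F_n(x_1 \stb x_n) = F_2(x_1, F_{n-1}(x_2 \stb x_n)),
\]
which follows directly from the right-nested definition of $F_n$. In each case the base step $n=2$ is trivial, so everything reduces to the inductive step.

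For (ii) the inductive step is a one-line calculation: $F_n(a \stb a) = F_2(a, F_{n-1}(a \stb a)) = F_2(a, a) = a$, where the middle equality uses the induction hypothesis.

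For (iii) I would first note that a neutral element $e$ of $F_2$ is in particular $F_2$-idempotent ($F_2(e,e)=e$), so by (ii) applied to the single element $a=e$ we get $F_{n-1}(e \stb e) = e$. To check the neutral element property of $F_n$ with $x$ inserted in position $i$, split on whether $i=1$ or $i>1$: when $i=1$ the outer $F_2$ passes $x$ through while the inner $F_{n-1}$ collapses to $e$, giving $F_2(x,e)=x$; when $i>1$ the induction hypothesis collapses the inner $F_{n-1}$ to $x$, and then $F_2(e,x)=x$ finishes the job.

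For (i), fix all coordinates but the $i$-th and consider the induced univariate map; I want it to be either order-preserving or order-reversing. If $i=1$ the map is $x_1 \mapsto F_2(x_1, c)$ with $c$ constant, which is monotone by the hypothesis on $F_2$. If $i>1$, the univariate map factors as the composition of $x_i \mapsto F_{n-1}(x_2 \stb x_n)$ (monotone in $x_i$ by induction) with $y \mapsto F_2(x_1, y)$ (monotone in $y$ by hypothesis), and a composition of two monotone univariate maps is monotone. The only subtlety, rather than a genuine obstacle, is that 'monotone' here permits either direction per variable, so one must not try to infer monotone-increasingness at this stage; upgrading to monotone-increasingness is precisely the role of Lemma \ref{lemma4} later in the paper.
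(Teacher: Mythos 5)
Your proof is correct: the induction on $n$ via the right-nested recursion $F_n(x_1\stb x_n)=F_2(x_1,F_{n-1}(x_2\stb x_n))$ handles all three properties cleanly, including the poset-valid observation that a composition of order-preserving/order-reversing maps is again one of the two. The paper gives no explicit proof (it dismisses the lemma as an easy consequence of the definitions), and your argument is precisely the routine verification intended, so nothing further is needed.
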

      \begin{obs}\label{obsneutral}
    If $F_2$ is defined by \eqref{eq2n}, the element $e$ is also a neutral element of  $F_2$ since $F_2(e,a)=F_n(e\stb e,a)=a= F_n(a,e \stb e)=F_2(a,e)$ for every $a\in X$.
    \end{obs}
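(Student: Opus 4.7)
The plan is to verify the two identities $F_2(e,a)=a$ and $F_2(a,e)=a$ directly from the definition \eqref{eq2n} together with the hypothesis that $e$ is a neutral element of the $n$-ary operation $F_n$. Recall that the definition of a neutral element for an $n$-associative function is strong: it requires $F_n(e,\ldots,e,x,e,\ldots,e)=x$ for \emph{every} position of $x$ in the argument list. This is the only fact about $F_n$ we will need.

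First I would substitute $a\mapsto e$ and $b\mapsto a$ into the defining equation $F_2(a,b):=F_n(a,e\stb e,b)$. This yields $F_2(e,a)=F_n(e,e\stb e,a)$, an $n$-tuple in which $a$ occupies the last coordinate and every other coordinate is $e$. Applying the neutral-element property of $F_n$ with $i=n$ gives $a$. Symmetrically, setting $a\mapsto a$ and $b\mapsto e$ gives $F_2(a,e)=F_n(a,e\stb e,e)$, where now $a$ sits in the first coordinate, and the same property with $i=1$ returns $a$. Combining these two computations yields $F_2(e,a)=a=F_2(a,e)$, as required.

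There is no real obstacle here; the observation is essentially a tautology once one unwinds the definition of $F_2$, and the author has in fact embedded the entire argument into the statement itself. The only point deserving attention is that the proof relies on the strong form of the neutral-element property (allowing $a$ in an arbitrary coordinate), rather than on the weaker two-sided condition familiar from the binary case, so one must be careful to invoke the definition from Section~1 rather than a binary analogue.
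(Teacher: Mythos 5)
Your proposal is correct and coincides with the paper's own argument: both simply unwind the definition $F_2(a,b)=F_n(a,e\stb e,b)$ and apply the $n$-ary neutral-element property of $F_n$ in the last coordinate for $F_2(e,a)$ and in the first coordinate for $F_2(a,e)$. Nothing more is needed.
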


    \begin{lemma}\label{lemma1}
    Let $F_n\colon X^n\to X$ be an $n$-associative function on the partially ordered set $X$. Assume $F_n$ is idempotent and monotone in
    the first and the last coordinates and is derived from an associative function $F_2$. Then $F_2$ is monotone.
    \end{lemma}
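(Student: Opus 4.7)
The plan is to realize $F_2(a,b)$ as the value of $F_n$ on a cleverly chosen $n$-tuple built only from $a$ and $b$, and then read off monotonicity of $F_2$ directly from the assumed monotonicity of $F_n$ in its first and last coordinates. For this I would introduce the shorthand $c^{k}:=F_2(c,F_2(c,\ldots,F_2(c,c)))$ for the $k$-fold iterated $F_2$-product of $c$; since $F_2$ is associative this is well defined, and the derivation hypothesis lets me freely regroup and compute any value of $F_n$ as such an iterated product.

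The only consequence of idempotency of $F_n$ I will use is the identity $b^{n}=b$ for every $b\in X$, which holds because $b=F_n(b,\ldots,b)$ unfolds precisely to $b^{n}$. Combining this with associativity of $F_2$, I obtain the central identity
\[
F_n(a,\,b^{2},\,b,\ldots,b)\;=\;F_2\bigl(a,\,b^{2}\cdot b^{\,n-2}\bigr)\;=\;F_2(a,b^{n})\;=\;F_2(a,b),
\]
where the $n$-tuple has $a$ in position $1$, $b^{2}$ in position $2$, and $b$ in each of the remaining $n-2$ positions. Since the last $n-1$ coordinates do not depend on $a$, monotonicity of $F_n$ in its first coordinate immediately gives monotonicity of $a\mapsto F_2(a,b)$ for each fixed $b$. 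A symmetric computation
\[
F_n(a^{2},\,a,\ldots,a,\,b)\;=\;F_2(a^{n},\,b)\;=\;F_2(a,b),
\]
together with monotonicity of $F_n$ in its last coordinate, yields monotonicity of $b\mapsto F_2(a,b)$ for each fixed $a$, so $F_2$ is monotone in both arguments.

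The main obstacle is combinatorial rather than conceptual: naively filling positions $2,\ldots,n$ with $b$ only produces $F_n(a,b,\ldots,b)=F_2(a,b^{\,n-1})$, since there are $n-1$ slots available but $n$ copies of $b$ are required in the tail to trigger the collapse $b^{n}=b$. The remedy is to spend a single slot on $b^{2}$ instead of $b$, which boosts the exponent of $b$ in the tail from $n-1$ to $n$ and makes the identification work. The case $n=2$ is trivial, since ``monotone in the first and last coordinates" is then literally monotonicity of $F_2$; the manipulation above is therefore only invoked for $n\ge 3$.
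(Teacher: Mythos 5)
Your proof is correct and takes essentially the same route as the paper: both arguments use idempotency of $F_n$ together with the derivation from $F_2$ to rewrite $F_2(a,b)$ as a value of $F_n$ in which the varying argument occupies the first (resp.\ last) coordinate while the remaining coordinates are fixed, and then read off monotonicity of $F_2$ from monotonicity of $F_n$ in that coordinate. The only difference is cosmetic: the paper pads with copies of $F_{n-1}(a,\ldots,a)$ obtained by repeated substitution, whereas you pad with $b^{2},b,\ldots,b$ so the tail collapses via $b^{n}=b$.
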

    \begin{proof}
    We show that if $F_n$ is monotone in its last coordinate then so is $F_2$. Take an arbitrary $a \in X$ and let $b=F_{n-1}(a \stb a)$. In this case $F_2(b,a)=F_n(a \stb a)=a$. Substituting $a=F_2(b,a)$, we obtain $a=F_2(b,F_2(b,a))$. Using the same substitution $n-2$ times, we get that $a$ can be expressed as $F_{n-1}(c_1 \stb c_{n-1})$ for some $c_1 \stb c_{n-1}$. Then $F_2(a,x)=F_n(c_1 \stb c_{n-1},x)$ is clearly monotone in its last coordinate.

    Similarly, $F_2$ is monotone in its first coordinate if $F_n$ is.
    \end{proof}

    \begin{remark}\label{remark1}
    {\rm If $F_n$ is $n$-associative idempotent and monotone in the first and the last variables on a poset $X$, then, by Lemma \ref{lemma1}, $F_2$ is also monotone. It is easy to show that $F_k:=\underbrace{F_2\circ \dots \circ F_2}_{k-1}$ is $k$-associative and  monotone in each variable. In particular, $F_n$ is monotone in each of its variables.}
    \end{remark}

    \begin{lemma}\label{folemma}
    Let $F_n\colon X^n\to X$ be an $n$-associative function on a \textbf{totally ordered} set. Assume $F_n$ is idempotent and monotone in each variable and $F_n$ has a neutral element or is derived from an associative function $F_2$. Then $F_2$ is idempotent as well.
    \end{lemma}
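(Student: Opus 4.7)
The plan is to reduce both alternatives of the hypothesis to a single setting---$F_n$ is derived from an associative $F_2$---and then propagate idempotence of $F_n$ down to $F_2$ via the iteration structure and the total order on $X$. In the neutral-element case, Observation \ref{obsneutral} shows that $e$ is also a neutral element of the binary function $F_2(a,b):=F_n(a,e,\ldots,e,b)$, and Proposition \ref{propfrombinary} guarantees that $F_n$ is actually derived from this $F_2$. Combined with Lemma \ref{lemma1} (which applies since $F_n$ is monotone in the first and last coordinates), we may henceforth assume that $F_n$ is derived from an associative $F_2$ which is monotone in each variable.

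Now fix $a\in X$ and put $h(x):=F_2(a,x)$. The derived formula gives $h^{n-1}(a)=F_n(a,\ldots,a)=a$ by idempotence of $F_n$. Since $X$ is totally ordered, the monotone function $h$ is either non-decreasing or non-increasing. If $h$ is non-decreasing, the orbit $a, h(a), h^2(a),\ldots$ is weakly monotone in the index, so the identity $h^{n-1}(a)=h^0(a)=a$ forces $h(a)=a$, i.e.\ $F_2(a,a)=a$. If $h$ is non-increasing, then $h^2$ is non-decreasing and $(h^2)^{n-1}(a)=h^{2(n-1)}(a)=(h^{n-1})^2(a)=a$, so applying the previous argument to $h^2$ yields $h^2(a)=a$. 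Hence either $F_2(a,a)=a$ or $h$ exhibits a genuine $2$-cycle $a\leftrightarrow b$ with $b:=F_2(a,a)\ne a$.

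The main obstacle is to rule out this $2$-cycle. For even $n$ the contradiction is immediate: $n-1$ is odd and the $2$-cycle assumption then gives $h^{n-1}(a)=h(a)=b\ne a$, contradicting $h^{n-1}(a)=a$. For odd $n$ the argument is subtler. Associativity of $F_2$ forces $F_2(b,\cdot)=h\circ h$ and, symmetrically, $F_2(\cdot,b)=g\circ g$ with $g(x):=F_2(x,a)$, so that $a$ and $b$ are both fixed by each of these compositions. Combining this with the monotonicity of $F_n$ in a middle coordinate---for instance expanding $F_n(a,x,a,\ldots,a)$ through the derived formula as a composition of factors built from $h$ and $g$ and then comparing its values at $x=a$ and $x=b$ with its required monotone behavior---produces an incompatibility between the alternating pattern of the $2$-cycle and the total order on $X$. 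This forces $h(a)=a$ also in the odd case, and since $a\in X$ was arbitrary, $F_2$ is idempotent.
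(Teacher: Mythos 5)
Your reduction to the case where $F_n$ is derived from an associative, coordinatewise monotone $F_2$ is fine, and the orbit argument for $h(x):=F_2(a,x)$ correctly shows that for each $a$ either $F_2(a,a)=a$ or $b:=F_2(a,a)\ne a$ with $h(b)=a$; in particular your treatment of even $n$ is complete (and shorter than the paper's table argument). The gap is the odd case, which is exactly where the content of the lemma lies, and your closing paragraph is an assertion, not a proof. Worse, the specific comparison you suggest cannot produce a contradiction. In the $2$-cycle situation associativity gives $F_2(a,a)=b$, $F_2(a,b)=F_2(b,a)=a$, $F_2(b,b)=b$, so $\{a,b\}$ is a subsemigroup isomorphic to the two-element group with neutral element $b$, and for every word in $a$'s and $b$'s the value of $F_k$ is $a$ or $b$ according to the parity of the number of $a$'s. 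For odd $n$ this configuration is idempotent on $\{a,b\}$, has a neutral element, and every individual section of every $F_k$ restricted to $\{a,b\}$ is monotone (it is either the identity or the order-reversing swap of $a$ and $b$). In particular the section you single out, $x\mapsto F_n(a,x,a,\ldots,a)$, sends $a\mapsto a$ and $b\mapsto b$ (the number of $a$'s is $n$, resp.\ $n-1$), so it is increasing and yields no incompatibility at all.

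Consequently, no contradiction follows from the facts you have assembled (idempotence of $F_n$, monotonicity of single sections, the total order, even the presence of a neutral element); a correct argument must compare two \emph{different} sections of the same variable and exploit that monotonicity fixes one direction of that variable for all sections. This is precisely what the paper does: it runs a backward induction on $k$, assumes $F_{k-1}(a,\ldots,a)=b\ne a$ while $F_k$ is idempotent, shows via Lemmas \ref{lemma2} and \ref{lemma3} that $F_k$ applied to a word with $k-m$ letters $a$ and $m$ letters $b$ equals $F_{k-m}(a,\ldots,a)$, determines the direction of monotonicity in the last coordinate from the section $F_k(a,\ldots,a,\cdot)$ (since $F_k(a,\ldots,a,b)=b>a$), and then applies that same direction to the other sections, column by column, to obtain $a\le F_{k-1}(a,\ldots,a)\le F_{k-2}(a,\ldots,a)\le\cdots\le F_2(a,a)\le a$, a contradiction. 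Some cross-section step of this kind is indispensable --- the paper's own example after the lemma (the cyclic-group construction, which on $\{a,b\}$ is exactly the pattern above) shows that the $2$-cycle is perfectly compatible with everything you use --- so as written the crux of the odd-$n$ case is missing from your proposal.
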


    \begin{proof}
    We prove that $F_k=F_2 \circ \ldots \circ F_2$ is idempotent for every $2 \le k \le n$. We use backward induction. Arguing by contradiction,  assume that for some $k$ with $3\le k\le n$ there exists $a \in X$ such that has $F_{k-1}(a\stb a)=b\ne a$ and by the inductive hypothesis $F_k(x \stb x)=x$ for every $x\in X$. We note that the second condition holds for $k=n$, since $F_n$ is idempotent. Clearly, we may assume without any loss that $a<b$. We compare the following terms:
    \begin{table}[h]
    \caption{ \label{table2}}
    \bigskip
    \begin{tabular}{l|l|l|l|l}
    $F_k(a\stb  a,b)$ & $F_k(a \stb a, b,b)$ & $F_k(a\stb a,b,b,b)$&\dots& $F_k(a,b \stb b,b)$\\
    ~&~&~&~&~\\
    $F_k(a \stb  a,a)$ & $F_k(a\stb a, b,a)$ & $F_k(a\stb a,b,b,a)$&\dots&
    $F_k(a,b \stb b,a)$
    \end{tabular}
    \end{table}

    The function $F_k$ is monotone in each variable by Remark \ref{remark1}. Observe  that in Table \ref{table2} the elements in each column only differ in the last coordinate. Hence each of the elements in the lower row is not greater than the element above it by the monotonicity of $F_k$.

    Now we calculate expressions in Table \ref{table2}. It is clear that $F_k(a \stb a)=a$ by the inductive assumption.
    Before we continue, we present two useful lemmas.
    \begin{lemma}\label{lemma2}
    Let $a$ and $b$ be as above. Further, let $x_1= \ldots =x_l=a$ and  $x_{l+1}= \ldots =x_k=b$. Then for every $\pi \in Sym(k)$ we have
    \[
    F_k(x_1  \stb x_k) =F_k(x_{\pi(1)} \stb x_{\pi(k)}).
    \]
    \end{lemma}

    \begin{proof}
    Substituting $b=F_{k-1}(a\stb a)$ in the expression above, it is easy to see that we may rearrange $a$'s and $b$'s arbitrarily.
    \end{proof}

    \begin{lemma}\label{lemma3} Let $l$ and $m$ be fixed and $l+m=k$. If $1\le m\le k-2$, then $$F_k(\underbrace{a\stb a}_l,\underbrace{b \stb b}_m)= F_{l}(\underbrace{a\stb a}_{l}).$$
    In particular, if $m=k-1$, then
    $$F_k(a,\underbrace{b \stb b}_{k-1})=a.$$
    \end{lemma}
    \begin{proof}
    A direct calculation shows that the statement holds. Indeed,
    \[
    F_k(\underbrace{a\stb a}_l,\underbrace{b \stb b}_m)=F_k ( \underbrace{a\stb a}_l,\underbrace{F_{k-1}(a\stb a) \stb F_{k-1}(a\stb a)}_m).\]
    Now using associativity of $F_2$ and idempotency of $F_k$,
    we obtain
    \[
    F_2(a,F_{k-1}(a  \stb a))=F_k(a  \stb a) =a.
    \]
    Applying Lemma \ref{lemma2} and the previous observation $m$ times,
    we obtain \[ F_k ( \underbrace{a\stb a}_l,\underbrace{F_{k-1}(a\stb
    a) \stb F_{k-1}(a\stb a)}_m)=F_l(a \stb a).\]
    \end{proof}

    Using Lemma \ref{lemma2} and Lemma \ref{lemma3}, we get that
    \begin{equation}\label{eqsys}
    \begin{split}
    &F_k(a\stb a)=a\\
    F_k(a\stb a,  a, b,a)=&F_k(a\stb a, a,a,b)= F_{k-1}(a\stb a),\\
    F_k(a\stb a, b, b,a)=&F_k(a\stb a, a,b,b)= F_{k-2}(a\stb a),\\
    \vdots &~~~~~~~~~~~~~~~~~~~ \vdots\\
    F_k(a,b, b\stb b,a)=&F_k(a,a,b\stb b,b)=F_{2}(a,a),\\
    &F_k(a,b\stb b,b)=a.
    \end{split}
    \end{equation}
    Since in each column of the table we just change the last
    coordinate, we can use monotonicity. We note that $F_k$ is increasing (order-preserving) in the last variable since $F_k(a\dots,a, b)=b>a=F_k(a,\dots,a,a)$. Substituting the results of \eqref{eqsys} into the table, we get the following:
    \[
    \begin{array}{l|l|l|l|l}
     F_{k-1}(a\stb a) & F_{k-2}(a\stb a) & F_{k-3}(a\stb a)&\dots& a\\
    \uparrow&\uparrow&\uparrow&\uparrow&\uparrow\\
    a & F_{k-1}(a\stb a)& F_{k-2}(a\stb a)&\dots& F_2(a,a)
    \end{array}\]
    Here the notation $\uparrow$ means that an element in the lower row is less than or equal to
    the corresponding element in the upper row. Thus,
    $$a\le F_{k-1}(a\stb a)\le F_{k-2}(a\stb a)\le \dots \le  F_2(a,a)\le a.$$
    This gives
    $$a= F_{k-1}(a\stb a)= F_{k-2}(a\stb a)= \dots = F_2(a,a),$$
    a contradiction, since $F_{k-1}(a,\stb a)=b\ne a$ by our assumption. This shows that $F_k$ is idempotent for every $k\ge2$, finishing the proof of Lemma \ref{folemma}.
    \end{proof}
    The underlying set of the $n$-associative function in Lemma \ref{folemma} is totally ordered. The following example shows that this requirement is essential.
\begin{exa}
\rm{For $k \ge 3$ we construct a $k$-ary semigroup $(X,F_k)$, which
is derived from a non-idempotent semigroup $(X,F_2)$, where $F_2$ is
monotone in both of its variables and has a neutral element. Thus,
$F_{k-1}$ and $F_k$ are also monotone having neutral element. We
show that $F_k$ is idempotent and $F_{k-1}$ is not idempotent, thus
$F_2$ cannot be idempotent by Lemma \ref{lemoda} \ref{lemodab}. This
example shows that the condition that $X$ is a totally ordered set
is crucial in Lemma \ref{folemma}.

Let $X=\{m,M \} \cup Z_{k-1}$, where $Z_{k-1}$ is the cyclic group
of order $k-1$. We define a partial ordering on $X$ in the following
way. $M$ and $m$ are the largest and smallest elements of $X$,
respectively. The elements of $Z_{k-1}$ are mutually incomparable
but they are all larger than $m$ and smaller than $M$. The set $X$
endowed with this partial ordering is a modular lattice. Further we
build an associative function $F_2$ as follows:
\[ F_2(x,y)=
\begin{cases}
  M & \text{if } x=M \mbox{ or } y=M, \\
  m & \text{if } x=m \mbox{ or } y=m \mbox{ and } x,y <M, \\
  xy & \text{if } x,y \in Z_{k-1}.
\end{cases}
\]
It is easy to verify that $F_2$ is associative and monotone
increasing in both of its variables. The identity element $e$ of
$Z_{k-1}$ is the neutral element of $(X,F_2)$. One can define
$F_{k-1}$ and $F_{k}$ as before. By Lemma \ref{lemoda} the functions
$F_{k-1}$ and $F_k$ are $(k-1)$- and $k$-associative functions,
respectively. Both of them are monotone having neutral element.
Finally, it is easy to check that $F_{k-1}$ is not idempotent since
$F_{k-1}(a \stb a)=e$ for every $a \in Z_{k-1}$ while $F_{k}(x \stb
x)=x$ for every $x \in X$. Note that the cyclic group $Z_{k-1}$
might have been substituted by any nontrivial group whose exponent
divides $k-1$.}
\end{exa}
\begin{remark}
  {\rm  For distributive lattices the statement of Lemma \ref{folemma} seems true, but a potential proof would be basically different from the proof of the lemma. Thus, it goes beyond the topic of the current paper. (See also Question \ref{q2}
  in Section \ref{seccr}.)}
\end{remark}
    The following lemma provides extra information about monotone, associative and idempotent semigroups.
 \begin{lemma}\label{lemma4}
    Let $X$ be a partially ordered set. If $F_2\colon X^2\to X$ is associative idempotent and monotone in each variable, then $F_2$ is monotone increasing in each variable.
    \end{lemma}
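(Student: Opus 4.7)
The plan is to argue by contradiction, exploiting the fact that each slice of $F_2$ becomes a retraction once one combines associativity with idempotency at a single point. For each $b\in X$ define $L_b\colon X\to X$ by $L_b(x):=F_2(x,b)$; by symmetry (applying the same argument to $R_b(y):=F_2(b,y)$) it suffices to show that every $L_b$ is order-preserving. Using associativity together with $F_2(b,b)=b$, I get
\begin{equation*}
L_b(L_b(x)) \;=\; F_2(F_2(x,b),b) \;=\; F_2(x,F_2(b,b)) \;=\; F_2(x,b) \;=\; L_b(x),
\end{equation*}
so $L_b\circ L_b=L_b$. In particular every element of $\mathrm{Im}(L_b)$ is a fixed point of $L_b$.

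Now suppose towards a contradiction that some $L_b$ is not order-preserving. By the monotonicity hypothesis $L_b$ is order-preserving or order-reversing, so it must be order-reversing, and there exist $x_1\le x_2$ in $X$ with $L_b(x_1)>L_b(x_2)$. Writing $y_i:=L_b(x_i)$, both $y_1,y_2$ lie in $\mathrm{Im}(L_b)$ and satisfy $y_2<y_1$. I claim this contradicts the fact that $\mathrm{Im}(L_b)$ is an antichain. Indeed, whenever $y\le y'$ with $y,y'\in\mathrm{Im}(L_b)$, applying the order-reversing map $L_b$ and using that $y,y'$ are fixed points gives $y=L_b(y)\ge L_b(y')=y'$, forcing $y=y'$. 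So no two distinct elements of $\mathrm{Im}(L_b)$ are comparable, contradicting $y_2<y_1$.

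The main obstacle is conceptual rather than computational: one has to notice that associativity together with $F_2(b,b)=b$ promotes each slice $L_b$ to an idempotent self-map, and that an order-reversing retraction must have an antichain as its image. Once this retraction viewpoint is in place, the contradiction is immediate and, as a bonus, the argument goes through for arbitrary posets — unlike Lemma \ref{folemma} no total-ordering hypothesis on $X$ is required here.
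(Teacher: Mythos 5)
Your proof is correct and is essentially the paper's own argument in different clothing: both hinge on the identity $F_2(F_2(x,b),b)=F_2(x,b)$ (associativity plus idempotency at the fixed coordinate, which the paper writes as $F_2(x,F_2(x,y))=F_2(F_2(x,x),y)=F_2(x,y)$), and both reach the contradiction by applying the supposedly order-reversing slice to the strict inequality it produces. The retraction/fixed-point/antichain phrasing is a clean repackaging of that computation, not a genuinely different route.
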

    \begin{proof} Assume that $F_2$ is not monotone increasing in each variable. Let us assume that $F_2$ is decreasing in the second variable. We also exclude the case when $F_2$ is both increasing and decreasing in the second variable (i.e., $F_2(x, \cdot)$ is constant for any fixed $x\in X$), so that we may assume that there exist $x,y,z \in X$ such that $y <z$ and $F_2(x,y) >F_2(x,z)$.

    Now by the idempotency of $F_2$ we have $F_2(F_2(x,x),y)=F_2(x,y)$ and $F_2(F_2(x,x),z)=F_2(x,z)$. Our assumption
    then gives
    $$F_2(F_2(x,x),y)>F_2(F_2(x,x),z).$$
Using the associativity of $F_2$ we get $F_2(x,F_2(x,y)) >
F_2(x,F_2(x,z))$.

On the other hand, since $F_2(x,y) >F_2(x,z)$ and $F_2$ is
decreasing in the second variable we get $F_2(x,F_2(x,y))  \le
F_2(x,F_2(x,z))$, which contradicts our assumption.

One can get the same type of contradiction if we switch the role of
the coordinates. Thus, $F_2$ is monotone increasing in both
    variables.
    \end{proof}

    \begin{remark}
    {\rm The following examples demonstrate that if we omit any of the conditions of Lemma \ref{lemma4}, the conclusion of the lemma fails.
    \begin{enumerate}
    \item  Let $F_2(x,x)=x$ for $x\in \R$ and $F_2(x,y)=0$ if $x,y\in \R$, $x \ne y$. Then $F_2$ is associative and idempotent, but not monotone in each variable.
    \item Let $F_2(x,y)=2x-y$ for $x,y\in \R$. Then $F_2$ is idempotent and monotone in each variable, but not associative and clearly not monotone increasing.
    \item Let $F_2(x,y)=-x$, if $x,y>0$, and $F_2(x,y)=0$ otherwise. Then $F_2$ is associative, since $F_2(x, F_2(y,z))=F_2(F_2(x,y),z))=0$ and $F_2$ is monotone decreasing in each variable and $F_2$ is not idempotent.
    \end{enumerate}}
    \end{remark}

    \begin{cor}\label{cor1}
    If $(X, F_n)$ is a totally ordered $n$-ary semigroup, where $F_n=\underbrace{F_2\circ F_2\circ \dots\circ F_2}_{n-1}$ is
    idempotent and monotone in the first and the last variables, then $F_n$ is monotone {\rm increasing} in {\rm each} variable.
    Moreover, $F_k=\underbrace{F_2\circ \dots\circ F_2}_{k-1}$ is {\rm monotone increasing} for every $k\ge2$.
    \end{cor}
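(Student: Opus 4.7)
The plan is to chain together the lemmas that have been established earlier in the section, each of which provides exactly one of the needed ingredients. The goal is to upgrade the hypothesis ``monotone in the first and last variables'' all the way to ``monotone increasing in each variable,'' and then push this property from $F_2$ up to every $F_k$.

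First I would apply Lemma \ref{lemma1}: since $F_n$ is idempotent, derived from an associative $F_2$, and monotone in its first and last coordinates, the binary function $F_2$ is itself monotone (in each variable). By Remark \ref{remark1}, this already implies that $F_k = \underbrace{F_2 \circ \dots \circ F_2}_{k-1}$ is $k$-associative and monotone in each variable for every $k \ge 2$; in particular $F_n$ is monotone in every variable, not just the outer two.

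Next I would invoke Lemma \ref{folemma}: because $X$ is totally ordered, $F_n$ is idempotent, and $F_n$ is now known to be monotone in each variable, the lemma yields that $F_2$ is idempotent. At this point $F_2$ is associative, idempotent, and monotone in each variable, so Lemma \ref{lemma4} applies and promotes the monotonicity of $F_2$ to monotone \emph{increasing} in each variable.

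Finally, I would propagate this property to $F_k$. Since $F_k$ is obtained by repeated composition of $F_2$ with itself, and composition of increasing functions in each variable (substituted into a single coordinate at a time) preserves the monotone-increasing property, $F_k$ is monotone increasing in each variable for every $k \ge 2$; specializing to $k=n$ gives the first statement, and the general $k$ gives the ``moreover'' clause. There is no real obstacle here beyond bookkeeping: the one subtlety worth flagging is that Lemma \ref{folemma} genuinely uses the total-ordering hypothesis (as the subsequent example in the paper shows), so this is the step that would break if we tried to drop totality.
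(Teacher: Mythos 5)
Your proposal is correct and follows essentially the same chain as the paper's own proof: Lemma \ref{lemma1} to get monotonicity of $F_2$, Lemma \ref{folemma} (using the total order) to get idempotency, Lemma \ref{lemma4} to upgrade to monotone increasing, and then composition to pass to every $F_k$. Your explicit use of Remark \ref{remark1} to fill in that $F_n$ is monotone in \emph{all} variables before invoking Lemma \ref{folemma} is in fact slightly more careful than the paper's wording, but it is the same argument.
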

    \begin{proof}
    By definition, $F_2$ is associative. Since $F_n$ is monotone in each variable, so is $F_2$ by Lemma \ref{lemma1}. By Lemma \ref{folemma}, $F_2$ is idempotent. Thus by Lemma  \ref{lemma4}, it is monotone increasing. Thus
    $F_k=F_2\circ F_2\circ \dots\circ F_2$ is also  monotone increasing for every $k\ge2$.
    \end{proof}

    If $F_n$ is $n$-associative and has a neutral element, then there exists $F_2$ such that $F_n=F_2\circ F_2\circ \dots\circ F_2$.
    Using the results of this section, we prove the following proposition.
    \begin{prop}\label{prop}
    Let $(X,F_n)$ be a totally ordered $n$-ary semigroup, which is monotone idempotent and has a neutral element. Then $F_n$ is derived from a binary semigroup  $(X,F_2)$, where $F_2$ is also monotone idempotent and it also has a neutral element. Moreover $F_n$ is monotone increasing in each variables.
    \end{prop}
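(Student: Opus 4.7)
The plan is to package the lemmas and the Dudek--Mukhin result already proved in this section into a single proof by walking through the properties of $F_2$ one at a time. First, since $(X,F_n)$ has a neutral element $e$, Proposition \ref{propfrombinary} supplies a binary operation $F_2(a,b):=F_n(a,e\stb e,b)$ from which $F_n$ is derived, and Observation \ref{obsneutral} immediately tells us that $e$ remains a neutral element of $F_2$. So the binary semigroup $(X,F_2)$ we need already exists and has the neutral-element property for free.

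Next I would transfer monotonicity. Because $F_n$ is monotone in every variable (in particular in the first and last), Lemma \ref{lemma1} applies and yields that $F_2$ is monotone. This is the step where the ``derived from $F_2$'' structure is used; the hypothesis of Lemma \ref{lemma1} is met precisely because of Proposition \ref{propfrombinary}.

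Now I invoke Lemma \ref{folemma}, which is where the assumption that $X$ is totally ordered enters in an essential way. Its hypotheses---$F_n$ idempotent, monotone in each variable, derived from an associative $F_2$ on a totally ordered $X$---are all in place, so we conclude that $F_2$ is idempotent as well. Having collected associativity, idempotency, monotonicity, and the neutral element for $F_2$, I then apply Lemma \ref{lemma4} to upgrade monotonicity to \emph{monotone increasing} in each variable. That completes the statement about $F_2$. For the final assertion that $F_n$ itself is monotone increasing in each coordinate, Corollary \ref{cor1} applies directly, since we have now verified its hypotheses.

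There is really no new obstacle here; the proposition is a bookkeeping step whose content is distributed across the earlier lemmas. The one subtlety to emphasize in writing is that the totally ordered hypothesis is used exactly once, when invoking Lemma \ref{folemma}---the accompanying example in the paper shows this is necessary---and that each subsequent step relies on the idempotency of $F_2$ that Lemma \ref{folemma} provides. Beyond flagging that chain of dependencies, the proof reduces to citing Proposition \ref{propfrombinary}, Observation \ref{obsneutral}, Lemmas \ref{lemma1}, \ref{folemma}, \ref{lemma4}, and Corollary \ref{cor1} in order.
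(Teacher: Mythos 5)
Your proposal is correct and follows essentially the same route as the paper: obtain $F_2$ from Proposition \ref{propfrombinary}, then cite Observation \ref{obsneutral} and Lemmas \ref{lemma1}, \ref{folemma}, \ref{lemma4} to transfer the neutral element, monotonicity, idempotency, and monotone increasingness to $F_2$. Your explicit appeal to Corollary \ref{cor1} for the final claim that $F_n$ is monotone increasing in each variable is a slight tidying of the paper's write-up, which leaves that step implicit, but it is the same underlying argument.
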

    \begin{proof}
    Since $F_n$ is idempotent $n$-associative and has a  neutral element, it follows from Proposition \ref{propfrombinary} that $F_n=F_2\circ \dots  \circ F_2$, where $F_2\colon X^2\to X$ is associative. By Lemmas \ref{lemma1}, \ref{folemma}, and \ref{lemma4}, $F_2$ is monotone increasing and  idempotent.  By Observation \ref{obsneutral} that in this case $F_2$ has a neutral element, as well.
    \end{proof}

    \emph{Proof of Theorem \ref{thmbijection}.}
    By Proposition \ref{prop}, every $n$-associative function $F_n$ which is monotone idempotent and has a neutral element, is derived from an associative function $F_2$ determined by $F_2(a,b):=F_n(a,e \stb e,b)$ which is monotone idempotent and has a neutral element.  Then we have
    \begin{equation}\label{eqn2}
    F_n(a\stb a,b)=F_n(a,e\stb e, b)=F_n(a,b\stb b)=F_2(a,b).
    \end{equation}
    Recall that the map that assigns $F_2$ to $F_n$ was denoted by $\mathcal{F}$. By Lemma \ref{lemoda} and Proposition \ref{prop}, for every $F_2$, there exists $F_n$ satisfying \eqref{eqn2} whence $\mathcal{F}$ is surjective. The map $\mathcal{F}$ is injective since $F_2$ uniquely defines $F_n$.  This finishes the proof of Theorem \ref{thmbijection}.\qed

    \begin{remark}
    {\rm Using Corollary \ref{cor1} we may weaken the assumptions of Theorem \ref{thmbijection}, where $F_n$ is assumed to be monotone in each variable. Instead, we might have assumed that $F_n$ is monotone in the first and last variables.}

    \end{remark}
    \begin{lemma}\label{lemma8.1}
    Let $(X,F_n)$ be a totally ordered $n$-ary semigroup derived from $(X,F_2)$, where $F_2$ is idempotent, associative, monotone increasing and has a neutral element. Then
    \begin{equation}\label{eq8.1}
    F_n(a,y_1,\dots, y_{n-2},b)=F_2(a,b)
    \end{equation}
    whenever $a\le y_1\stb  y_{n-2}\le b.$
    \end{lemma}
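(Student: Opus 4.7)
The plan is to prove the identity by a monotonicity squeeze, bounding $F_n(a,y_1,\ldots,y_{n-2},b)$ between the values obtained by replacing all middle coordinates by $a$ and by $b$ respectively, and then showing that both extreme values already equal $F_2(a,b)$.

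First I would note that, since $F_n$ is derived from an associative, idempotent, monotone increasing $F_2$, Lemma \ref{lemoda} (together with Lemma \ref{lemma4}) gives that $F_n$ is monotone increasing in each of its variables. Using the hypothesis $a\le y_i\le b$ for every $i=1,\dots,n-2$, monotonicity yields
\begin{equation*}
F_n(a,a,\ldots,a,b)\;\le\;F_n(a,y_1,\ldots,y_{n-2},b)\;\le\;F_n(a,b,\ldots,b,b).
\end{equation*}

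Next I would compute the two extreme values. For the upper bound, since $F_n$ is derived from $F_2$, associativity of $F_2$ gives
\begin{equation*}
F_n(a,b,\ldots,b,b)=F_2\bigl(a,F_{n-1}(b,\ldots,b)\bigr)=F_2(a,b),
\end{equation*}
where the last equality uses idempotency of $F_{n-1}$ (which holds because $F_2$ is idempotent). For the lower bound, I would use the other associative grouping:
\begin{equation*}
F_n(a,a,\ldots,a,b)=F_2\bigl(F_{n-1}(a,\ldots,a),b\bigr)=F_2(a,b),
\end{equation*}
again by idempotency. Thus both ends of the sandwich equal $F_2(a,b)$, and the squeeze forces $F_n(a,y_1,\ldots,y_{n-2},b)=F_2(a,b)$.

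I do not expect any real obstacle here: the statement is essentially a consequence of monotonicity and iterated idempotency. The only subtlety worth stating carefully is that monotonicity of $F_n$ really is available in each variable (not only in the first and last), which is why I would explicitly invoke Lemma \ref{lemoda} at the start rather than assume it tacitly.
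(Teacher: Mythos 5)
Your proof is correct and takes essentially the same route as the paper: the paper also deduces that $F_n$ is monotone (increasing) and then squeezes $F_n(a,y_1,\dots,y_{n-2},b)$ between the two extreme tuples, whose common value $F_2(a,b)$ is exactly the content of \eqref{eqn2}. Your explicit computation of $F_n(a,\dots,a,b)$ and $F_n(a,b,\dots,b)$ via associativity and idempotency of $F_{n-1}$ is just an unpacking of that equation, so the two arguments coincide in substance.
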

    \begin{proof}
    By Theorem \ref{thmbijection}, the function $F_n$ is monotone, and therefore, the claim directly comes from \eqref{eqn2}.
    \end{proof}

\section{Proof of the main results}\label{s3}
\emph{Proof of Theorem \ref{thm2}:}
    It follows from Proposition \ref{prop} that $F_n$ is derived from an associative function $F_2$. Moreover $F_2$ is monotone, idempotent and has a neutral element. Therefore, we may apply Theorem \ref{thm1} in a special form  which we obtain when $F_2$ is a choice function (i.e., when $(X,F_2)$ is quasitrivial). Since $F_n$ is obtained as the composition of $n-1$ copies of $F_2$, we get that $F_n$ is also a choice function.
    \qed

    \smallskip

    \emph{Proof of Theorem \ref{corollarymain}.}
    First assume that $c$ and $d$ commute with every element of the set $A:=\{a_1\stb a_n\}\subset I$. In this case we may assume, using  idempotency of $F_2$,  that there exists $k \le n$ such that $F_n(a_1 \stb a_n)=F_k(c,a'_2 \stb a'_{k-1},d)$ and $c < a'_i < d$ for all $i=2 \stb k-1$. By Proposition \ref{prop}  we can apply Lemma \ref{lemma8.1} that gives $F_n(a_1 \stb a_n)=F_2(c,d)$.

    Now assume that $d$ does not commute with an element of $A$ but $c$ commutes with all of them. In this case $g(d) \in A$ is the one not commuting with $d$. Since $c$ is the smallest element of $A$ we get $c<g(d)$. Further, $d$ is the largest element of $A$ and $g$ is decreasing so $g(a_i)>c$ for all $i =1 \stb n$. Theorem \ref{thm1} gives $F_2(c,a_i)=F_2(a_i,c)=c$ for all $i=1 \stb n$. Therefore $F_n(a_1 \stb a_n)=c$. Since $F_2(c,a_i)=c$ for every $i$, we get $F_n(a_1 \stb a_n)=F_2(c,d)=c$.
    A similar argument shows that $F_n(a_1 \stb a_n)=d=F_2(c,d)$ if $c$ and $d$ switch the roles.

    Finally, assume that neither $c$ nor $d$ commutes with every element of $A$. In this case the set $A$ contains $g(c)$ and $g(d)$ and $g(g(c))=c$, $g(g(d))=d$. We claim that  $g(c)=d$ and $g(d)=c$. Indeed, if $g(c) \in A$, then $g(c)\le d$ since $d$ is the largest element of $A$, and similarly $g(d)\ge c$. Since $g$ is monotone and $g(g(d))=d$, we get $d=g(g(d))\le g(c)$. Therefore $g(c)=d$. Similarly using $c=g(g(c))\ge g(d)$ we get $g(d)=c$. What we obtained is that $(c,d)$ is an extra-critical pair in this case.

    Now $c$ and $d$ are the elements in $A$ that do not commute. This also implies that $c$ and $d$ commute with all other element of $A$.

    By definition of $e_1$ and $e_2$ (see Section \ref{secresults}), $e_1$ and $e_2$ are the value of the first and respectively the last appearance of $c$ or $d$.  Since $e_1$ commutes with its left neighbours  and $e_2$ commutes with its right neighbours, we may assume that $a_1=e_1$ and $a_2=e_2$.
    We get the following cases:
    \begin{enumerate}
        \item[(i)]If $e_1\ne e_2$, then by Lemma \ref{lemma8.1} $F_n(e_1\stb e_2)=F_2(e_1, e_2).$
        \item[(ii)] If $e_1= e_2$, then we show that $F_n(e_1\stb e_2)=F_2(e_1,e_2)=e_1.$
    \end{enumerate}
    Using Lemma \ref{lemma8.1} for arbitrary number of variables, we get that every subsequence of $a_1\stb a_n$ consisting of elements lying strictly between $c$ and $d$ can be eliminated. Thus, one can write $F_n(a_1 \stb a_n)=F_k(b_1\stb b_k)$,
    where $k \le n$ and $b_i=c \mbox{ or } d$ for every $1 \le i \le k$ and, in our case, $b_1=b_k=e_1$. Since $F_2$ is idempotent, we may assume $b_i \ne b_{i+1}$ for $1 \le i \le k-1$. Using idempotency and associativity of $F_2$ again, we have $F_2(F_2(c,d),F_2(c,d))=F_2(c,d)$ and $F_2(F_2(d,c),F_2(d,c))=F_2(d,c)$. Therefore $F_k(b_1 \stb b_k)$ can be reduced to either
    $F_3(c,d,c)$ or $F_3(d,c,d)$.

    If $F_2(c,d)=c$, then $F_3(c,d,c)=F_2(F_2(c,d),c)=F_2(c,c)=c =e_1$.

    If $F_2(c,d)=d$, then $F_3(c,d,c)=F_2(F_2(c,d),c)=F_2(d,c)$. Since $c$ and $d$ do not commute  we get $F_2(d,c)=c=e_1$.

    Similarly, one can verify that $F_3(d,c,d)=d=e_1$.  This finishes the proof of Theorem \ref{corollarymain}.
    \qed

    \section{Concluding remarks}\label{seccr}
    In this paper we have investigated the $n$-ary associative, idempotent, monotone functions $F_n\colon X^n\to X$ that have neutral elements. We have shown that such an $F_n$ in general setting when the underlying set $X$ is totally ordered implies the existence of binary functions $F_2\colon X^2\to X$ with similar properties such that $F_n$ is derived from $F_2$. However many of the properties of $F_n$ are inherited by $F_2$ if $X$ is only a partially ordered set. We summarize the results of Section \ref{secbinary} (if $X$ is a totally ordered set) in the following table.
    \begin{center}
    \begin{tabular}{l l l}
    Properties of  $F_n$  &  & Properties of  $F_2$\\
    \hline
    $n$-\textrm{assoc. with a neutral element}& $\Longrightarrow$& \textrm{assoc. with a neutral element}\\
    \hline
    \textrm{Now we assume } $F_n=F_2\circ\dots \circ F_2$:&&\\
    \hline
    $n$-\textrm{assoc., idempotent, monotone}&$\Longrightarrow$& \textrm{monotone}\\
    $n$-\textrm{assoc., idempotent, monotone}&$\Longrightarrow$& \textrm{idempotent}\\
    $n$-\textrm{assoc., idempotent, monotone}&$\Longrightarrow$& \textrm{monotone increasing}\\
    \hline
    \textrm{Some easy observations show:}&&\\
    \hline
    $n$-\textrm{associative}&$\Longleftarrow$& \textrm{associative}\\
    \textrm{monotone increasing} &$\Longleftarrow$& \textrm{monotone increasing}\\
    \textrm{idempotent} &$\Longleftarrow$& \textrm{idempotent}\\
    \textrm{has a neutral element} &$\Longleftarrow$& \textrm{has a neutral element}\\
    \hline
    \textrm{Thus:} &&\\
    \hline
    $n$-\textrm{assoc., idempotent, mon. incr.}&$\Longleftrightarrow$& \textrm{assoc., idemp., mon. incr.}\\
    \textrm{with a neutral element }&~& \textrm{with a neutral element}

    \end{tabular}
    \end{center}

    In the main results we have obtained a characterization of $n$-associative, idempotent, monotone functions on any (not necessarily bounded) subinterval of $\R$ in the spirit of the characterization of the binary case. We also generalize the classical Czogala--Drewniak theorem. In addition, we get that every $n$-associative, idempotent and monotone function with a neutral element must be quasitrivial (conservative).

    Further improvement would be based on the elimination of any of the properties of $F_n$. The most crucial property seems to be that $F_n$ has a neutral element since all of our results based on this condition as otherwise $F_n$ is not necessarily derived from $F_2$. On the other hand, in \cite{MMT} one can be found a characterization of associative, conservative, monotone increasing, idempotent binary functions defined on $[0,1]$ without assumption of having a neutral element. Therefore, we suggest the following questions for further investigation.
    \begin{que}
    How can we characterize $n$-associative, monotone, idempotent functions on a subinterval of $\R$?
    \end{que}

    \begin{que}\label{q2}
    How can we characterize $n$-associative functions on distributive lattices provided that the functions are monotone idempotent and have neutral elements?
    \end{que}

%    \section*{Acknowledgement}
 %       The authors would like to express their gratitude to the referee, whose comments and suggestions highly improved the presentation of this article. One of the most important observations of the referee was that most of the results of Section \ref{secbinary} can be stated in a more general setting.
 %       The authors appreciate a lot the valuable suggestions of Mikhail Volkov about the final form of the paper.

    \end{document}